\newtheorem{theorem}{Theorem}[section]
\newtheorem{proposition}[theorem]{Proposition}
\newtheorem{lemma}[theorem]{Lemma}
\newtheorem{corollary}[theorem]{Corollary}
\theoremstyle{definition}
\newtheorem{definition}[theorem]{Definition}
\newtheorem{remark}[theorem]{Remark}
\newtheorem{example}[theorem]{Example}
\newcommand{\ZZ}{ \ensuremath{\mathbb{Z}}}
\newcommand{\Gin}{\ensuremath{\mathrm{Gin}}}
\newcommand{\reg}{\ensuremath{\mathrm{reg}}}
\newcommand{\Tor}{\ensuremath{\mathrm{Tor}}}
\newcommand{\Ext}{\ensuremath{\mathrm{Ext}}}
\newcommand{\mideal}{\ensuremath{\mathfrak{m}}}
\newcommand{\lex}{{\mathrm{lex}}}
\def\cocoa{{\hbox{\rm C\kern-.13em o\kern-.07em C\kern-.13em o\kern-.15em A}}}
\newcommand{\lk}{\mathrm{lk}}
\newcommand{\yy}{\mathbf{y}}
\newcommand{\pol}{\mathrm{pol}}
\begin{document}

\title[Simplicial complexes with Serre's conditions]
{h-vectors of simplicial complexes\\
 with Serre's conditions}

\author{Satoshi Murai}
\address{
Satoshi Murai,
Department of Mathematical Science,
Faculty of Science,
Yamaguchi University,
1677-1 Yoshida, Yamaguchi 753-8512, Japan.
}

\author{Naoki Terai}
\address{
Naoki Terai,
Department of Mathematics, Faculty of Culture and Education,
Saga University, Saga, 840-8502, Japan.
}

\keywords{$h$-vectors, Serre's conditions, Stanley--Reisner rings, graded Betti numbers}
\subjclass[2000]{13F55; 13D02}


\begin{abstract}
We study $h$-vectors of simplicial complexes which satisfy Serre's condition ($S_r$).
Let $r$ be a positive integer.
We say that a simplicial complex $\Delta$ satisfies Serre's condition ($S_r$)
if $\tilde H_i(\lk_\Delta(F);K)=0$ for all $F \in \Delta$ and for all $i < \min \{r-1,\dim \lk_\Delta(F)\}$,
where $\lk_\Delta(F)$ is the link of $\Delta$ with respect to $F$
and where $\tilde H_i(\Delta;K)$ is the reduced homology groups of $\Delta$
over a field $K$.
The main result of this paper is that if $\Delta$ satisfies Serre's condition ($S_r$) then 
(i) $h_k(\Delta)$ is non-negative for $k =0,1,\dots,r$
and (ii) $\sum_{k\geq r}h_k(\Delta)$ is non-negative.
\end{abstract}

\maketitle

\section{Introduction}

The study of $h$-vectors of simplicial complexes is an interesting research area in combinatorics as well as in combinatorial commutative algebra.
One of fundamental questions on $h$-vectors is ``when $h$-vectors are non-negative?".
A classical result of Stanley guarantees that
$h$-vectors of Cohen--Macaulay complexes are non-negative.
In this paper, we generalize this classical result
in terms of Serre's conditions, which appear in commutative algebra.

A \textit{simplicial complex} $\Delta$ on $[n]=\{1,2,\dots,n\}$
is a collection of subsets of $[n]$
satisfying that (i) $\{i\} \in \Delta$ for all $i \in [n]$
and (ii) if $F \in \Delta$ and $G \subset F$ then $G \in \Delta$.
An element $F$ of $\Delta$ is called a \textit{face} of $\Delta$
and maximal faces (under inclusion) are called \textit{facets} of $\Delta$.
We say that $\Delta$ is \textit{pure} if all facets of $\Delta$ have the same cardinality.
Let $f_k (\Delta)$ be the number of faces $F \in \Delta$ with $|F|=k+1$,
where $|F|$ is the cardinality of $F$.
The \textit{dimension} of $\Delta$ is 
$\dim \Delta = \max \{ k : f_k(\Delta) \ne 0\}$.
The vector $f(\Delta) = (f_0(\Delta),f_1(\Delta), \dots, f_{d-1}(\Delta))$ is called the \textit{$f$-vector} of $\Delta$,
where $d= \dim \Delta +1$.
The {\em $h$-vector} $h(\Delta)=(h_0(\Delta),h_1(\Delta),\dots,h_d(\Delta))$ of
$\Delta$ is  defined by the relation
$\sum_{i=0}^{d} f_{i-1}(\Delta)(x-1)^{d-i}
=\sum_{i=0}^{d} h_{i}(\Delta)x^{d-i},$
where $f_{-1}(\Delta) =1$.
Thus knowing $f(\Delta)$ and knowing $h(\Delta)$ are equivalent.

We introduce Serre's condition ($S_r$) for simplicial complexes.
Let $\tilde H_i(\Delta;K)$ be the reduced homology groups of $\Delta$
over a field $K$. 
The \textit{link} of 
$\Delta$ with respect to a face $F \in \Delta$ is the simplicial complex
$\lk_\Delta(F)=\{G \subset [n]\setminus F: G \cup F \in \Delta\}$.
Let $r \geq 1$ be an integer.
We say that a $(d-1)$-dimensional simplicial complex $\Delta$ satisfies \textit{Serre's condition} ($S_r$) (over $K$)
if, for every face $F \in \Delta$ (including the empty face),
$\tilde H_i(\lk_\Delta(F);K)=0$ for $i < \min \{r-1,\dim \lk_\Delta(F)\}$.

We remark some basic facts.
Every simplicial complex satisfies ($S_1$).
On the other hand,
for $r \geq 2$,
simplicial complexes satisfying ($S_r$) are pure and strongly connected.
($S_2$) states that $\Delta$ is pure and $\lk_\Delta(F)$ is connected for all faces $F \in \Delta$
with $|F|<\dim \Delta$.
($S_d$) is equivalent to the famous Cohen--Macaulay property of simplicial complexes (see \cite{St2}).
Also, the above combinatorial definition of ($S_r$) is equivalent to
that of usual Serre's condition ($S_r$) in commutative algebra by the Stanley--Reisner correspondence.

A classical influential result of Stanley states that
if a $(d-1)$-dimensional simplicial complex $\Delta$ is Cohen--Macaulay
(that is, if it satisfies ($S_d$))
then $h_k(\Delta)$ are non-negative for all $k$.
We generalize this Stanley's result in the following way.

\begin{theorem}
\label{1}
If a simplicial complex $\Delta$ satisfies Serre's condition $(S_r)$ then $h_k(\Delta)$
is non-negative for $k=0,1,\dots,r$.
\end{theorem}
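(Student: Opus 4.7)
The plan is to translate the combinatorial hypothesis into commutative algebra on the Stanley--Reisner ring $K[\Delta]$ and then exploit the resulting depth bound via a generic linear system of parameters. Since the combinatorial condition $(S_r)$ on $\Delta$ is equivalent to the algebraic Serre condition $(S_r)$ on $K[\Delta]$ (as noted in the introductory remarks), one obtains $\mathrm{depth}\, K[\Delta] \geq \min(r,d)$, where $d = \dim \Delta + 1$. The case $r \geq d$ is already Stanley's Cohen--Macaulay theorem, so we may assume $r < d$; after passing to an infinite base field (which preserves both the $(S_r)$ hypothesis and the $h$-vector), we choose a generic linear system of parameters $\theta_1,\ldots,\theta_d \in K[\Delta]_1$.

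By the depth bound, $\theta_1,\ldots,\theta_r$ is a regular $K[\Delta]$-sequence, so the partial reduction $A := K[\Delta]/(\theta_1,\ldots,\theta_r)$ has Hilbert series $\Hilb(A;t) = h(\Delta;t)/(1-t)^{d-r}$, with $\dim A = d-r$. To extract non-negativity of $h_k(\Delta)$ for $k \leq r$, the strategy is to continue quotienting by the remaining generic forms $\theta_{r+1},\ldots,\theta_d$, which form a filter-regular sequence on $A$ by genericity. Each such step introduces a finite-length kernel, and iterating the corresponding short exact sequences yields an identity of the form
\[
\Hilb(K[\Delta]/(\theta_1,\ldots,\theta_d);t) \;=\; h(\Delta;t) \,+\, \varepsilon(t),
\]
where the polynomial correction $\varepsilon(t)$ is built from the Hilbert series of the successive kernels, which in turn are governed by the local cohomology modules $H^i_{\mideal}(K[\Delta])$ for $i \geq r$.

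The key step --- and the main anticipated obstacle --- is to show that $\varepsilon(t)$ contributes nothing in degrees $k \leq r$, so that $h_k(\Delta)$ then coincides with the (manifestly non-negative) dimension of the Artinian quotient in that range. This will require the full content of $(S_r)$, not merely the depth bound at the maximal ideal: one needs vanishing and support-dimension estimates for $H^i_{\mideal}(K[\Delta])$, combined with Hochster's formula, which forces $H^i_{\mideal}(K[\Delta])_j = 0$ for $j > 0$. Tracking the internal degree shifts carefully through the iterated long exact sequences arising from multiplication by the $\theta_j$ should then confine $\varepsilon(t)$ to degrees strictly greater than $r$. An alternative route, suggested by the $\gins$ and $\Shift$ notation already set up in the preamble, is to first pass via algebraic shifting to a shifted complex having the same $h$-vector and the same $(S_r)$ property, and then verify non-negativity combinatorially in that more rigid setting.
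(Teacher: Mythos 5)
Your opening moves are sound and agree with the paper's setup: $(S_r)$ gives $\mathrm{depth}\,K[\Delta]\geq\min\{r,d\}$, so generic $\theta_1,\dots,\theta_r$ form a regular sequence, and the remaining generic $\theta_{r+1},\dots,\theta_d$ are filter-regular, producing the identity $\Hilb(K[\Delta]/\Theta K[\Delta];t)=h(\Delta;t)+\varepsilon(t)$ with $\varepsilon(t)=\sum_{k=r}^{d-1}(1-t)^{d-1-k}\,t\cdot\Hilb\bigl((0:_{K[\Delta]/(\theta_1,\dots,\theta_k)}\theta_{k+1});t\bigr)$. This is exactly the paper's Lemma 2.1, and you correctly identify that everything reduces to showing the colon kernels vanish in degrees $\leq r-1$. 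But that reduction is where the proof actually lives, and the mechanism you propose for it does not work. The vanishing $H^i_\mideal(K[\Delta])_j=0$ for $j>0$ holds for \emph{every} Stanley--Reisner ring, with or without $(S_r)$, so it cannot by itself force the kernels into high degrees --- and indeed, tracking it through the long exact sequences only shows that $(0:_{A_k}\theta_{k+1})$ is concentrated in degrees $\leq k$, an upper bound where you need a lower bound. Since $h_{r+1}$ can be negative for $(S_r)$ complexes, no argument using only $\ZZ$-graded degree information about $H^i_\mideal(K[\Delta])$ can close this gap.

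What the paper actually uses is finer and goes in the opposite direction: by Yanagawa's theorem each $\Ext_S^{n-i}(K[\Delta],\omega_S)$ is a squarefree module, the $\ZZ^n$-graded Hochster formula together with $(S_r)$ and purity kills its squarefree components in degrees $\geq i+1-r$, and since Betti numbers of squarefree modules are concentrated in squarefree degrees this yields $\reg\bigl(\Ext_S^{n-i}(K[\Delta],\omega_S)\bigr)\leq i-r$ (Lemma 2.8). Converting this into the vanishing $(0:_{A_k}\theta_{k+1})_j=0$ for $j\leq r-1$ then requires two further nontrivial inputs you do not supply: Schenzel's spectral-sequence bound $\dim_K H_1(y_1,\dots,y_{k+1};R)_j\leq\sum_\ell\dim_K H_{1+\ell}(y_1,\dots,y_{k+1};H^\ell_\mideal(R))_j$, and the Aramova--Herzog theorem on Koszul homology of almost regular sequences, applied after Matlis/local duality to the $\Ext$ modules. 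Note also that a mere support-dimension estimate $\dim\Ext^{n-i}_S(K[\Delta],\omega_S)\leq i-r$ (which is the literal algebraic content of $(S_r)$) is not known to suffice --- the authors explicitly leave open whether the theorem holds for arbitrary homogeneous ideals, where squarefreeness and hence the regularity bound are unavailable. Your alternative route via algebraic shifting is likewise unsubstantiated: shifting is not known to preserve $(S_r)$, so it cannot be used as a reduction here.
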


Actually, we prove that $(h_0,h_1,\dots,h_r)$ is an $M$-vector,
that is, the $f$-vector of a multicomplex (see \cite[p.\ 56]{St2}).
In particular, this fact shows that if $h_t=0$ for some $t \leq r$ then $h_k=0$ for all $t \leq k \leq r$.
We prove the following stronger statement on vanishing of $h$-vectors.

\begin{theorem}
\label{3-5}
Let $\Delta$ be a simplicial complex which satisfies Serre's condition $(S_r)$.
If $h_t(\Delta)=0$ for some $t \leq r$ then $h_k(\Delta)=0$ for all $k \geq t$ and
$\Delta$ is Cohen--Macaulay.
\end{theorem}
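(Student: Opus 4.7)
The plan is to combine the strengthening of Theorem~\ref{1} --- that $(h_0,\ldots,h_r)$ is an $M$-vector --- with an argument that forces $\Delta$ to be Cohen--Macaulay. I would proceed in three steps: propagate the vanishing from degree $t$ through degree $r$ via the $M$-vector structure, prove $\Delta$ is Cohen--Macaulay, then finish via Stanley's classical theorem. For the first step, recall that an $M$-vector is by definition the Hilbert function of an artinian standard graded $K$-algebra $A$. If $\dim_K A_t=h_t=0$ and $A$ is generated in degree one, then $A_k=0$ for every $k\ge t$, so $h_k=0$ for $t\le k\le r$. This is Macaulay's theorem applied to the truncated $h$-vector.

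The second step is the heart of the argument. Since $(S_r)$ implies $\mathrm{depth}\,K[\Delta]\ge r$, a generic sequence of linear forms $\theta_1,\ldots,\theta_r$ is regular on $K[\Delta]$ (assuming $K$ infinite, as one may), and $B:=K[\Delta]/(\theta_1,\ldots,\theta_r)$ has Hilbert series $(\sum_i h_i t^i)/(1-t)^{d-r}$. The strategy is to compare the artinian algebra $A$ of step one --- whose Hilbert function is $(h_0,\ldots,h_{t-1},0,0,\ldots)$ --- with the full artinian reduction $A':=K[\Delta]/(\theta_1,\ldots,\theta_d)$, which always satisfies $\dim_K A'_k\ge h_k$ pointwise and has total dimension $\ge e(K[\Delta])=\sum_k h_k$, with equality (in every degree and on the total) characterizing Cohen--Macaulayness. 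A careful Hilbert-series and multiplicity count, combining the relation $\dim_K A'\ge e(K[\Delta])$ with the vanishing $A_k=0$ for $k\ge t$, should pin down $A'$ and force the extension $\theta_{r+1},\ldots,\theta_d$ to act regularly on $B$, giving Cohen--Macaulayness.

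Once CM is in hand, Stanley's theorem yields that the full $h$-vector is an $M$-vector, and then $h_t=0$ forces $h_k=0$ for every $k\ge t$ by the same $M$-vector argument as in the first step. The main obstacle is the second step: there is no purely formal reason why $(S_r)$ plus a single vanishing $h$-value should imply Cohen--Macaulayness, so the proof must interlock the depth estimate from $(S_r)$ with the multiplicity constraints coming from the vanishing. It is plausible that the argument will use the structural results of Section~4, presumably sharpened characterizations of $(S_r)$ in terms of the generic initial ideal $\gin(I_\Delta)$ or of the graded Betti numbers of $K[\Delta]$, so reliance on the paper's earlier machinery appears unavoidable.
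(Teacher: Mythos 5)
Your step 1 is fine, and your step 3 (Stanley's theorem once Cohen--Macaulayness is known) is fine. The gap is exactly where you suspect it: step 2, and as written it does not close. Two things are missing. First, the vanishing $A_t=0$ lives in an \emph{abstract} artinian algebra realizing the $M$-vector; to transfer it to the actual reduction $A'=K[\Delta]/\Theta K[\Delta]$ you need the \emph{equality} $\dim_K A'_k=h_k$ for $k\le r$, not merely the pointwise inequality $\dim_K A'_k\ge h_k$ that you cite (with only ``$\ge 0$'' in degree $t$ you learn nothing about $A'_t$). That equality is precisely Theorem \ref{2}, which holds only for the special linear system of parameters produced by Proposition \ref{prop}; it is legitimate to invoke it, but it must be invoked. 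Second, and more seriously, even granting $A'_k=h_k$ for $k\le r$ (hence $A'_k=0$ for $k\ge t$ and $\ell(A')=\sum_{k<t}h_k$), the multiplicity inequality $e(K[\Delta])\le \ell(A')$ points in only one direction: it yields $\sum_{k\ge t}h_k\le 0$. To force the equality $e=\ell(A')$ that characterizes Cohen--Macaulayness you need the matching lower bound $\sum_{k\ge t}h_k\ge 0$, which is not formal and does not come from any Hilbert-series bookkeeping; it is exactly Theorem \ref{main3} (proved via Swartz's identity, Lemma \ref{3-3}, and induction over links), which your proposal never invokes. Your guess that the missing input lies in Section 4 (generic initial ideals, Betti bounds for monomial ideals) points in the wrong direction; the needed input is the link recursion of Section 3.

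For comparison, the paper does not prove Cohen--Macaulayness in one shot by a multiplicity count. It inducts on $\dim\Delta$, proving that $(S_r)$ together with $h_r(\Delta)=0$ implies $h_{r+1}(\Delta)=0$ \emph{and} that $\Delta$ satisfies $(S_{r+1})$. The squeeze is: $(r+1)h_{r+1}(\Delta)=\sum_{v}h_r(\lk_\Delta(v))\ge 0$ from Swartz's identity and Theorem \ref{1} applied to links, versus $h_{r+1}(\Delta)=-\sum_k\dim_K A_k(\Theta;R)_r\le 0$ from Lemma \ref{3}(ii) and the vanishing of $(R/\Theta R)_{r+1}$. Equality then kills $h_r(\lk_\Delta(v))$ for every vertex (feeding the induction on links) and kills $A_k(\Theta;R)_r$, which via Corollary \ref{3-2} and Hochster's formula gives $\tilde H_{r-1}(\Delta;K)=0$ and hence $(S_{r+1})$. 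That said, your outline can be repaired into a correct and arguably shorter argument: use Theorem \ref{2} to get $\ell(A')=\sum_{k<t}h_k$, the bound $e\le\ell(A')$ to get $\sum_{k\ge t}h_k\le 0$, and Theorem \ref{main3} together with $h_t=\cdots=h_r=0$ to get $\sum_{k\ge t}h_k=\sum_{k\ge r}h_k\ge 0$; equality then gives Cohen--Macaulayness and Stanley's theorem finishes. But as submitted, the decisive inequality is absent and the proof is incomplete.
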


The non-negativity in Theorem \ref{1} is sharp.
For all integers $2 \leq r <d$,
there exists a ($d-1$)-dimensional simplicial complex $\Delta$ which satisfies Serre's condition ($S_r$)
but $h_{r+1}(\Delta)<0$ (\cite[Example 3.5]{TY}).
On the other hand,
we prove the following weak non-negative property,
which shows that negative parts of $h$-vectors are not very big.

\begin{theorem} 
\label{main3}
If a $(d-1)$-dimensional simplicial complex $\Delta$ satisfies Serre's condition $(S_r)$ then  $h_r(\Delta)+h_{r+1}(\Delta)+ \cdots + h_d(\Delta)$
is non-negative.
\end{theorem}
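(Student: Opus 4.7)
The plan is to prove the equivalent inequality $\sum_{k=0}^{r-1} h_k(\Delta) \le f_{d-1}(\Delta)$, which combines with $\sum_{k=0}^{d} h_k(\Delta) = f_{d-1}(\Delta)$ to yield the theorem. The approach combines the Cohen--Macaulayness of the $(r-1)$-skeleton of $\Delta$ with an analysis of a generic Artinian reduction of $K[\Delta]$.

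First I would verify that the $(r-1)$-skeleton $\Delta^{(r-1)}$ is Cohen--Macaulay. For any face $F \in \Delta^{(r-1)}$, the link $\lk_{\Delta^{(r-1)}}(F)$ equals the $(r-|F|-1)$-skeleton of $\lk_\Delta(F)$, and since the reduced homology of a skeleton agrees with that of the original complex in degrees strictly below the top dimension of the skeleton, Serre's condition $(S_r)$ for $\Delta$ supplies exactly the vanishing required by Reisner's criterion. Consequently $h_k(\Delta^{(r-1)}) \ge 0$ for every $k$, and $\sum_{k=0}^{r} h_k(\Delta^{(r-1)}) = f_{r-1}(\Delta)$. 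A direct Hilbert-series computation further gives $h_k(\Delta^{(r-1)}) = \sum_{j=0}^{k} \binom{d-r-1+k-j}{k-j} h_j(\Delta)$ for $k \le r$.

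Next I would fix a generic linear system of parameters $\theta_1, \dots, \theta_d$ for $K[\Delta]$. Since $(S_r)$ forces $\mathrm{depth}\, K[\Delta] \ge r$, the forms $\theta_1, \dots, \theta_r$ constitute a regular sequence on $K[\Delta]$, and the Artinian quotient $A = K[\Delta]/(\theta_1, \dots, \theta_d)$ satisfies $\dim_K A_k = h_k(\Delta)$ for $k \le r-1$. The length of $A$ admits a decomposition $\mathrm{length}(A) = f_{d-1}(\Delta) + C$ with $C \ge 0$ expressible via a Schenzel-type formula in terms of the local cohomology modules $H^i_{\mideal}(K[\Delta])$ for $i \ge r$ (the only non-vanishing ones under $(S_r)$).

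The main obstacle is the final step: one must show that the correction $C$ is concentrated in degrees $k \ge r$ and then combine this degree distribution with the skeleton inequality to obtain $\sum_{k<r} h_k(\Delta) \le f_{d-1}(\Delta)$. The naive skeleton bound alone only yields $\sum_{k<r} h_k(\Delta) \le f_{r-1}(\Delta)$, which is generally too weak since $f_{r-1}(\Delta) \ge f_{d-1}(\Delta)$; tightening it to $f_{d-1}(\Delta)$ requires careful exploitation of the depth-$r$ structure imposed by $(S_r)$, via Hochster's formula for the local cohomology modules $H^i_{\mideal}(K[\Delta])$ and a precise accounting of their Hilbert series contributions to $\mathrm{length}(A)$.
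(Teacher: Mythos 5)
Your reduction of the statement to $\sum_{k=0}^{r-1}h_k(\Delta)\le f_{d-1}(\Delta)$ is correct, and the preliminary claims (Cohen--Macaulayness of the $(r-1)$-skeleton, non-negativity of $h_k(\Delta)$ for $k\le r$, $\dim_K A_k=h_k(\Delta)$ for small $k$ under a suitable generic l.s.o.p.) are all true under $(S_r)$. But the argument does not close: the step you yourself label ``the main obstacle'' is exactly the content of the theorem, and no proof of it is offered. Concretely, writing $\mathrm{length}(A)=f_{d-1}(\Delta)+C$ with $C\ge 0$, what you need is $\sum_{k\ge r}\dim_K A_k\ge C$; knowing that $C$ is ``concentrated in degrees $\ge r$'' does not give this, because $C$ is the length of a colon module $\bigl(0:_{R/(\theta_1,\dots,\theta_{d-1})R}\theta_d\bigr)$, which is a submodule of $R/(\theta_1,\dots,\theta_{d-1})R$ and not of $A=R/\Theta R$, so its graded pieces are not dominated by those of $A$ in any obvious way. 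A phrase such as ``requires careful exploitation of the depth-$r$ structure \dots{} and a precise accounting'' is a description of a missing argument, not an argument. A secondary (fixable) issue: $\mathrm{depth}\,K[\Delta]\ge r$ alone does not yield $\dim_K A_k=h_k(\Delta)$ for $k\le r-1$; one needs the vanishing $A_k(\Theta;R)_j=0$ for \emph{all} $k\le d-1$ and $j\le r-2$, which uses the full regularity bounds on the modules $\Ext_S^{n-i}(K[\Delta],\omega_S)$ coming from $(S_r)$, not just the vanishing of low local cohomology.

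For comparison, the intended proof is much shorter and avoids local cohomology entirely at this stage: by Swartz's identity $i\,h_i(\Delta)+(d-i+1)h_{i-1}(\Delta)=\sum_{v}h_{i-1}(\lk_\Delta(v))$ for pure complexes, one gets
\begin{equation*}
d\sum_{k=r}^{d}h_k(\Delta)=r\,h_r(\Delta)+\sum_{v}\sum_{i=r}^{d-1}h_i\bigl(\lk_\Delta(v)\bigr),
\end{equation*}
and since each $\lk_\Delta(v)$ again satisfies $(S_r)$, induction on dimension handles the double sum while $h_r(\Delta)\ge 0$ (the already-established non-negativity) handles the first term. If you want to salvage your route, you would need to replace the vague ``accounting'' step by an identity or inequality of comparable strength; as written, the proposal proves only the weaker bound $\sum_{k<r}h_k(\Delta)\le f_{r-1}(\Delta)$.
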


Our proofs of the above theorems
are based on commutative algebra theory.
In particular, to prove Theorem \ref{1},
we prove the following algebraic result which is itself of interest.
Let $S=K[x_1,\dots,x_n]$ be a polynomial ring over an infinite field $K$
with each $\deg x_i=1.$
For a graded $S$-module $N$ and an integer $j \in \ZZ$,
we denote by $N_j$ the graded component of degree $j$ of $N$.
Let $I \subset S$ be a homogeneous ideal and $R=S/I$.
The \textit{Hilbert series} of $R$ is the formal power series
$F(R,\lambda) = \sum_{q=0}^{\infty} (\dim_K R_q) \lambda^q$.
It is known that $F(R,\lambda)$ is a rational function
of the form 
$(h_0 + h_1 \lambda + \cdots + h_s \lambda^s) / (1 -
\lambda)^d$,
where each $h_i$ is an integer with $h_s \neq 0$
and where $d$ is the Krull dimension of $R$.
See \cite[Corollary 4.1.8]{BH}.
The vector $(h_0(R), h_1(R), \ldots, h_s(R)) = (h_0, h_1, \ldots, h_s)$ is called the
{\em $h$-vector} of $R$.
For convenience, we define $h_k(R)=0$ for $k > s$. 
Let $M$ be a finitely generated graded $S$-module.
The \textit{graded Betti numbers} $\beta_{i,j}(M)$ of $M$ are integers defined by
$\beta_{i,j}(M)=\dim_K \Tor_i(M,K)_j$.
The \textit{(Castelnuovo-Mumford) regularity} of $M$ is
the integer
$$\reg\hspace{1pt} M =\max\big\{j : \beta_{i,i+j}(M) \ne 0 \mbox{ for some }i\big\}.$$
We denote by $\omega_S$ the canonical module of $S$.

\begin{theorem}
\label{2}
Let $r \geq 1$ be an integer,
$I \subset S$ a homogeneous ideal and
$d$ the Krull dimension of $R=S/I$.
Suppose that $\reg(\Ext_S^{n-i}(R,\omega_S)) \leq i-r$ for $i=0,1,\dots,d-1$.
There exists
a linear system of parameters $\Theta =\theta_1,\dots,\theta_d$ of $R$ such that
$$h_k(R)=\dim_K (R/\Theta R)_k \ \ \mbox{ for $k \leq r$}.$$
\end{theorem}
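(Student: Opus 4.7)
The plan is to induct on $d = \dim R$. The case $d = 0$ is trivial, since then $R$ is already Artinian and $\Theta$ is empty. For $d \geq 1$, write $N_i := \Ext_S^{n-i}(R, \omega_S)$ and choose a generic linear form $\theta_1 \in S_1$; since $K$ is infinite we may assume $\theta_1$ is part of an l.s.o.p.\ of $R$ and is filter regular on each of $R$ and $N_0, N_1, \dots, N_{d-1}$. Let $R' = R/\theta_1 R$, which has Krull dimension $d-1$. The strategy is to check that $R'$ satisfies the hypothesis of the theorem with $d$ replaced by $d-1$, apply the inductive hypothesis to $R'$, and pull back the resulting l.s.o.p.\ to $R$.

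\textbf{Step 1: $h$-vectors agree in low degrees.} The four-term exact sequence $0 \to (0 :_R \theta_1)(-1) \to R(-1) \xrightarrow{\theta_1} R \to R' \to 0$ yields
$$F(R', \lambda)(1-\lambda)^{d-1} = F(R, \lambda)(1-\lambda)^d + \lambda F(0 :_R \theta_1, \lambda)(1-\lambda)^{d-1}.$$
By filter regularity, $(0 :_R \theta_1)$ lies in $H_\mideal^0(R)$. The hypothesis with $i = 0$ gives $\reg N_0 \leq -r$, and graded local duality (interpreting $N_0$ as the Matlis dual of $H_\mideal^0(R)$) forces $H_\mideal^0(R)_j = 0$ for $j < r$. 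Hence $(0 :_R \theta_1)_j = 0$ for $j < r$, so the correction term lives in degrees $\geq r+1$ and $h_k(R') = h_k(R)$ for $k \leq r$.

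\textbf{Step 2: the regularity hypothesis descends to $R'$.} Applying $\Hom_S(-, \omega_S)$ to $0 \to R(-1) \xrightarrow{\theta_1} R \to R' \to 0$ produces a long exact sequence which splits, for each $j$, into the short exact sequence
$$0 \to (N_{j+1}/\theta_1 N_{j+1})(1) \to \Ext_S^{n-j}(R', \omega_S) \to (0 :_{N_j} \theta_1) \to 0.$$
For the left term, $\reg(N_{j+1}/\theta_1 N_{j+1}) \leq \reg N_{j+1} \leq (j+1) - r$, so after the $(1)$ shift its regularity is at most $j-r$. For the right term, filter regularity of $\theta_1$ on $N_j$ places $(0 :_{N_j} \theta_1)$ inside $H_\mideal^0(N_j)$, a finite-length module whose top degree is bounded by $\reg N_j \leq j-r$. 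Since the regularity of the middle term of a short exact sequence is bounded by the maximum of the outer regularities, $\reg \Ext_S^{n-j}(R', \omega_S) \leq j - r$ for $j = 0, 1, \dots, d-2$, as required.

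\textbf{Step 3: conclude.} By induction applied to $R'$, there is an l.s.o.p.\ $\bar\theta_2, \dots, \bar\theta_d$ of $R'$ with $\dim_K(R'/(\bar\theta_2, \dots, \bar\theta_d)R')_k = h_k(R')$ for $k \leq r$. Lifting to linear forms $\theta_2, \dots, \theta_d \in S_1$, the sequence $\Theta = \theta_1, \dots, \theta_d$ is an l.s.o.p.\ of $R$ and $R/\Theta R = R'/(\bar\theta_2, \dots, \bar\theta_d)R'$; combined with Step 1 this proves the theorem. The main obstacle is Step 2: one must convert the hypothesis into usable information via the graded-duality translation of regularity bounds into vanishing of local cohomology in low degrees, and simultaneously manage the finitely many avoidance conditions needed to have a single $\theta_1$ that is filter regular on $R$ and on every $N_i$.
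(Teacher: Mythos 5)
Your argument is correct in outline and takes a genuinely different route from the paper. The paper does not induct on dimension: it fixes one almost regular sequence $\yy=y_1,\dots,y_n$ that works simultaneously for $R$ and all the modules $\Ext^i_S(R,\omega_S)$, maps $A_k(\yy;R)_{j-1}$ onto by the first Koszul homology $H_1(y_1,\dots,y_{k+1};R)_j$, bounds that Koszul homology by Koszul homologies of the local cohomology modules via Schenzel's spectral-sequence inequality (Lemma \ref{5}), converts those by local duality and Koszul self-duality into Koszul homologies of the $\Ext^{n-i}_S(R,\omega_S)$, and kills them with the Aramova--Herzog regularity bound (Lemma \ref{4}); Lemma \ref{3}(i) then finishes. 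Your hyperplane-section induction replaces the Schenzel and Aramova--Herzog machinery with the elementary facts that $\reg(M/\theta M)\leq \reg M$ and $\mathrm{end}(H^0_\mideal(M))\leq\reg M$ for a filter-regular $\theta$, together with the long exact sequence of $\Ext$'s; this is arguably more transparent. What the paper's route buys is the stronger Proposition \ref{prop} (vanishing of all $A_k(\Theta;R)_j$ for $j\leq r-1$), which is reused later for Theorem \ref{3-5}, Corollary \ref{3-2}, and the remark on generic initial ideals, whereas your argument only delivers the Hilbert-function statement directly (though the $A_k$-vanishing can be recovered a posteriori from Stanley's identity, since the correction terms all contribute with the same sign).

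One step needs repair. In Step 2 you dualize the sequence $0 \to R(-1) \xrightarrow{\theta_1} R \to R' \to 0$, but this is not exact on the left unless $\theta_1$ is a nonzerodivisor on $R$, and your hypotheses only give $(0:_R\theta_1)_j=0$ for $j<r$; the annihilator may well be nonzero in higher degrees (it is exactly a piece of $H^0_\mideal(R)$, which need not vanish). The standard fix is to set $\bar R = R/(0:_R\theta_1)$ and dualize the genuine short exact sequence $0 \to \bar R(-1) \to R \to R' \to 0$ instead. Since $(0:_R\theta_1)$ has finite length, $\Ext^{n-j}_S(\bar R,\omega_S)\cong N_j$ for $j\geq 1$, and for $j=0$ the module $\Ext^{n}_S(\bar R,\omega_S)$ is the Matlis dual of a quotient of $H^0_\mideal(R)$, hence still has regularity at most $-r$; so your two outer regularity bounds, and therefore the conclusion of Step 2, go through unchanged. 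With that correction the induction closes and the proof is complete.
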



This paper is organized as follows:
In Section 2, we prove Theorems \ref{1} and \ref{2} by purely algebraic methods.
In Section 3, by using Swartz's equations on $h$-vectors of pure simplicial complexes, we prove Theorems \ref{3-5} and \ref{main3}.
In addition, a few examples of simplicial complexes satisfying Serre's conditions are given in this section.
In Section 4, we study monomial ideals and graded Betti numbers.
First, we generalize Theorems \ref{1}, \ref{3-5} and \ref{main3}
to monomial ideals by using polarization trick.
Second, we give some upper bounds of graded Betti numbers
for monomial ideals in terms of Serre's conditions.

\section{Non-negativity of $h$-vectors}

We refer the reader to \cite{St2} for the basics on commutative algebra theory.
Throughout this paper, we assume that $K$ is an infinite field.

Let $M$ be a finitely generated graded $S$-module.
For a sequence $\yy =y_1,\dots,y_\ell \in S_1$ of linear forms,
we set
$$A_k(\yy;M)= \left(0:_{M/(y_1,\dots,y_k)M} y_{k+1} \right) =\big\{f \in M/(y_1,\dots,y_k)M: y_{k+1} f=0\big\}$$
for $k=0,1,\cdots,\ell-1$.
The following fact is well-known (cf.\ \cite[Section 3]{St1}).

\begin{lemma}
\label{3}
Let $I \subset S$ be a homogeneous ideal and
$d$ the Krull dimension of $R=S/I$.
Let $\Theta=\theta_1,\dots,\theta_d$ be a linear system of parameters of $R$.
If $\dim_K A_k(\Theta;R)_j=0$ for all $k=0,1,\dots,d-1$ and for all $j \leq r-1$ then
\begin{itemize}
\item[(i)] $h_j(R)= \dim_K(R/\Theta R)_j$
 for $j \leq r.$
\item[(ii)] $h_{r+1}(R)= \dim_K(R/\Theta R)_{r+1} -\sum_{k=0}^{d-1} \dim_K A_k(\Theta;R)_r.$
\end{itemize}
\end{lemma}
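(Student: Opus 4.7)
The plan is to track how the Hilbert series changes as we quotient $R$ by the linear forms $\theta_1,\dots,\theta_d$ one at a time. Set $R_k = R/(\theta_1,\dots,\theta_k)R$, so that $R_0 = R$ and $R_d = R/\Theta R$, and abbreviate $A_k = A_k(\Theta;R)$. The starting point is the four-term graded exact sequence
$$0 \longrightarrow A_k(-1) \longrightarrow R_k(-1) \xrightarrow{\ \theta_{k+1}\ } R_k \longrightarrow R_{k+1} \longrightarrow 0,$$
which is exact essentially by definition: the kernel of multiplication by $\theta_{k+1}$ on $R_k$ is $A_k$ (shifted by one in degree), and its cokernel is $R_{k+1}$.

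Taking the alternating sum of Hilbert series yields the recursion
$$F(R_{k+1},\lambda) = (1-\lambda) F(R_k,\lambda) + \lambda F(A_k,\lambda),$$
and iterating from $k=0$ through $k=d-1$ produces the closed form
$$F(R/\Theta R,\lambda) = (1-\lambda)^d F(R,\lambda) + \sum_{k=0}^{d-1} \lambda(1-\lambda)^{d-1-k} F(A_k,\lambda).$$
Since $R/\Theta R$ is artinian the left-hand side is a polynomial, and by the definition of the $h$-vector $(1-\lambda)^d F(R,\lambda) = \sum_{i\ge 0} h_i(R)\lambda^i$.

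The hypothesis that $\dim_K (A_k)_j = 0$ for every $k$ and every $j \le r-1$ translates into the statement that each series $F(A_k,\lambda)$ is divisible by $\lambda^r$, so each summand $\lambda(1-\lambda)^{d-1-k}F(A_k,\lambda)$ is divisible by $\lambda^{r+1}$. Reading off the coefficient of $\lambda^j$ for $j\le r$ on both sides of the closed form above then gives part (i). For part (ii), the coefficient of $\lambda^{r+1}$ on the right-hand side is $h_{r+1}(R)$ plus a correction from the sum; for each $k$ the only surviving contribution to the coefficient of $\lambda^{r+1}$ comes from pairing the constant term $1$ of $(1-\lambda)^{d-1-k}$ with the degree-$r$ coefficient of $F(A_k,\lambda)$, giving the total correction $\sum_{k=0}^{d-1}\dim_K (A_k)_r$.

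The only step requiring any real justification is the exactness of the four-term sequence above, and this is immediate once one unpacks the definitions of $A_k$ and $R_{k+1}$. I therefore expect no substantive obstacle here; the lemma is stated to be well known, and the proof is essentially bookkeeping of how the Hilbert series deforms as one successively mods out by a linear system of parameters.
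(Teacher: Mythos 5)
Your proof is correct, and it is the standard argument: the paper itself gives no proof of this lemma, citing Stanley [St1, Section 3], where exactly this computation (the four-term exact sequence $0 \to A_k(-1) \to R_k(-1) \to R_k \to R_{k+1} \to 0$ and the resulting recursion $F(R_{k+1},\lambda)=(1-\lambda)F(R_k,\lambda)+\lambda F(A_k,\lambda)$ for Hilbert series) is carried out. Your bookkeeping of the coefficients of $\lambda^j$ for $j\le r$ and of $\lambda^{r+1}$ is accurate and yields both (i) and (ii) as stated.
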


We say that a sequence $\yy=y_1,\dots,y_\ell$ of linear forms
is an \textit{almost regular sequence} on $M$ if $A_k(\yy;M)$ has finite length for $k=0,1,\dots,\ell-1$.
The set of almost regular sequences $\yy=y_1,\dots,y_\ell$ on $M$ contains a non-empty Zariski open subset
of $S_1 \bigoplus_K \cdots \bigoplus_K S_1 \cong K^{\ell \times n}$
(see \cite{AH}).
Also, if $y_1,\dots,y_d$ is an almost regular sequence on $M$, where $d$
is the Krull dimension of $M$,
then $y_1,\dots,y_d$ forms a system of parameters of $M$.

Let $H_i(\yy;M)$ (respectively, $H^i(\yy;M)$) be the $i$-th Koszul homology (respectively, Koszul cohomology) of $M$
with respect to a sequence $\yy=y_1,\dots,y_\ell$.
A key lemma to prove Theorem \ref{2} is the next result due to
Aramova and Herzog \cite[Theorem 1.1]{AH}.

\begin{lemma}[Aramova-Herzog]
\label{4}
Let $M$ be a finitely generated graded $S$-module of Krull dimension $d$
and $\yy=y_1,\dots,y_n$ an almost regular sequence on $M$ which forms a $K$-basis of $S_1$.
Then
$H_i(y_1,\dots,y_k;M)$ has finite length
and $H_i(y_1,\dots,y_k;M)_{i+j} = 0$ for $j > \reg\hspace{1pt} M$
in the following cases:
\begin{itemize}
\item[(i)] $i \geq 1$ and $k=1,2,\dots,n$;
\item[(ii)] $i=0$ and $k=d,d+1,\dots,n$.
\end{itemize}
\end{lemma}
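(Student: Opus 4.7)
The plan is to prove both the finite-length claim and the regularity bound from a single source: the long exact sequence in Koszul homology arising from the short exact sequence of complexes
$$0 \to K(y_1,\ldots,y_k;M) \to K(y_1,\ldots,y_{k+1};M) \to K(y_1,\ldots,y_k;M)[-1](-1) \to 0,$$
whose connecting map is (up to sign) multiplication by $y_{k+1}$. I would then run two separate inductions on $k$: finite length by upward induction, and the regularity bound by downward induction.

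\emph{Finite length.} For case (i) ($i\ge 1$), I would induct upward on $k$. The base case $k=1$ is immediate: $H_1(y_1;M)=(0:_M y_1) = A_0(\mathbf{y};M)$ has finite length by almost regularity, and $H_i(y_1;M)=0$ for $i\ge 2$. In the inductive step the long exact sequence expresses $H_i(y_1,\ldots,y_k;M)$ as an extension with factors a quotient of $H_i(y_1,\ldots,y_{k-1};M)$ and the submodule $(0:_{H_{i-1}(y_1,\ldots,y_{k-1};M)}y_k)$. When $i\ge 2$ both factors have finite length by the inductive hypothesis; when $i=1$ the second factor is a submodule of $(0:_{M/(y_1,\ldots,y_{k-1})M}y_k)=A_{k-1}(\mathbf{y};M)$, which is finite length by almost regularity. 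For case (ii) ($i=0$, $k\ge d$), I would use that the almost regular sequence $y_1,\ldots,y_d$ is a system of parameters on $M$ (since $\dim M=d$), so $M/(y_1,\ldots,y_d)M$ has finite length, and hence so does $H_0(y_1,\ldots,y_k;M)=M/(y_1,\ldots,y_k)M$ for every $k\ge d$.

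\emph{Regularity bound.} I would induct downward on $k$. The base case $k=n$ holds because $y_1,\ldots,y_n$ is a $K$-basis of $S_1$, making $K(y_1,\ldots,y_n;S)$ the minimal graded free resolution of $K$; therefore $H_i(y_1,\ldots,y_n;M)=\Tor_i^S(M,K)$, whose degree $i+j$ part has dimension $\beta_{i,i+j}(M)$, which vanishes for $j>\reg M$ by definition. For the inductive step, the long exact sequence in internal degree $d$ reads
$$H_{i+1}(y_1,\ldots,y_{k+1};M)_d \to H_i(y_1,\ldots,y_k;M)_{d-1} \xrightarrow{y_{k+1}} H_i(y_1,\ldots,y_k;M)_d \to H_i(y_1,\ldots,y_{k+1};M)_d.$$
For $d\ge i+\reg M+2$ both outer terms vanish by the inductive hypothesis, so multiplication by $y_{k+1}$ is an isomorphism between the corresponding graded pieces of $H_i(y_1,\ldots,y_k;M)$. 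Combined with the finite length established in the first step, which forces vanishing in all sufficiently high degrees, iterating this isomorphism downward yields $H_i(y_1,\ldots,y_k;M)_e=0$ for every $e\ge i+\reg M+1$, which is precisely the claimed bound.

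The main obstacle is keeping the two inductions coherent: the finite-length statement is naturally proved by upward induction on $k$, whereas the regularity bound wants downward induction, and the restriction $k\ge d$ in case (ii) is forced exactly because the $i=0$ finite-length claim fails without it. A secondary subtlety is that the long exact sequence only yields an isomorphism on graded pieces in degrees $\ge i+\reg M+2$; the tight bound at degree $i+\reg M+1$ is recovered only by propagating vanishing downward from high degrees via that isomorphism, rather than by an application of the exact sequence at the boundary degree itself.
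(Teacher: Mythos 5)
Your proof is correct, but it takes a genuinely different route from the paper. The paper treats \cite[Theorem 1.1]{AH} as a black box: writing $s_\ell$ for the top nonvanishing degree of $A_{\ell-1}(\yy;M)$ and $r_\ell$ for the top degree shift of the higher Koszul homologies $H_i(y_1,\dots,y_\ell;M)$, it quotes the Aramova--Herzog equality $r_\ell=\max\{s_1,\dots,s_\ell\}$, deduces $r_1\leq\cdots\leq r_n\leq \reg M$ from $H_i(\yy;M)\cong\Tor_i(M,K)$, and handles case (ii) by identifying the top degree of $H_0(y_1,\dots,y_k;M)$ with $s_{k+1}$ once finite length is known. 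You instead reprove the needed content from scratch out of the long exact sequence $\cdots\to H_i(y_1,\dots,y_k;M)(-1)\xrightarrow{\pm y_{k+1}} H_i(y_1,\dots,y_k;M)\to H_i(y_1,\dots,y_{k+1};M)\to\cdots$, running an upward induction on $k$ for finite length (correctly isolating $A_{k-1}(\yy;M)$ as the only input from almost regularity at homological degree $1$, and the system-of-parameters property for case (ii)) and a downward induction on $k$ for the degree bound, anchored at $k=n$ where the Koszul complex computes $\Tor$. The degree bookkeeping in your inductive step is right: the two outer terms vanish in internal degrees $\geq i+\reg M+2$, the resulting multiplication isomorphisms chain the graded piece in degree $i+\reg M+1$ to arbitrarily high degrees, and finite length then kills it --- which is exactly why you need the finite-length half first. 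What each approach buys: the paper's proof is shorter given the citation and also records the sharper equality $r_\ell=\max\{s_1,\dots,s_\ell\}$ (which is genuinely stronger than the one-sided bound you establish), while yours is self-contained, elementary, and makes visible why the restriction $k\geq d$ in case (ii) is forced. No gaps.
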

\pagebreak

\begin{proof}
The finite length property is easy (cf.\ \cite[Lemma 3.2]{Sc}).
We explain how to apply \cite[Theorem 1.1]{AH} for vanishing of Koszul homologies.
For $\ell=1,2,\dots,n$,
let
$$s_\ell = \max\big\{j:A_{\ell-1}(\yy;M)_j \ne 0\big\}$$
and
$$r_\ell = \max \big\{j: H_i(y_1,\dots,y_\ell;M)_{i+j} \ne 0 \mbox{ for some }i \geq 1\big\}.$$
Aramova and Herzog \cite[Theorem 1.1]{AH} proved $r_\ell=\max\{s_1,\dots,s_\ell\}$ for all $\ell$.

Since $\Tor_i(M,K)_j\cong H_i(\yy;M)_j$, $r_n \leq \reg\hspace{1pt} M$.
Then $r_1 \leq \cdots \leq r_n \leq \reg\hspace{1pt} M$.
This proves (i).
Also, for $k=d,d+1,\dots,n-1$, since $H_0(y_1,\dots,y_k;M)=M/(y_1,\dots,y_k)M$ has finite length,
$$
\max\big\{j: H_0(y_1,\dots,y_k;M)_j \ne 0\big\}
= \max\big\{j: A_k(\yy;M)_j \ne 0\big\}=s_{k+1} \leq r_n \leq \reg\hspace{1pt} M.$$
Finally, $H_0(\yy;M)_j \cong \Tor_0(M,K)_j =0$ for $j > \reg\hspace{1pt}  M$ is obvious.
\end{proof}

Let $H_\mideal^i(M)$ be the $i$-th local cohomology module of $M$.
Another key lemma is the next result due to Schenzel \cite{Sc,S2}.

\begin{lemma}[Schenzel]
\label{5}
Let $M$ be a finitely generated graded $S$-module of Krull dimension $d$
and $\yy =y_1,\dots,y_p$ an almost regular sequence on $M$,
where $p \geq 1$ is an integer.
Then, for all $i \geq 1$ and $j \in \ZZ$, one has
$$\dim_K H_i(\yy;M)_j \leq \sum_{\ell=0}^{\min\{p-i,d\}} \dim_K H_{i+\ell}\big(\yy;H_\mideal^\ell(M)\big)_j.$$
\end{lemma}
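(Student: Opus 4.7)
The plan is to use a double-complex / spectral sequence argument going back to Schenzel's dissertation. Let $C^\bullet = \check{C}^\bullet(x_1, \dots, x_n)$ denote the \v{C}ech complex computing $H_\mideal^\bullet(-)$, and consider the double complex
$$D^{s,t} \;=\; C^s \otimes_S K_{-t}(\yy; S) \otimes_S M, \qquad s \geq 0,\ -p \leq t \leq 0,$$
with \v{C}ech differential in the $s$-direction and (sign-shifted) Koszul differential in the $t$-direction, so that $t = -i$ records Koszul homological degree $i$. The total complex $\mathrm{Tot}(D)$ carries two spectral sequences, both converging to $H^{s+t}(\mathrm{Tot}(D))$; the whole point is to compare them.

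Because each Koszul term is $S$-free and each $C^s$ is $S$-flat, the two filtrations (by rows and by columns) produce
$${}^{\mathrm{I}}\!E_2^{s,t} \;=\; H_{-t}\bigl(\yy;\, H_\mideal^s(M)\bigr) \qquad \text{and} \qquad {}^{\mathrm{II}}\!E_2^{s,t} \;=\; H_\mideal^s\bigl(H_{-t}(\yy; M)\bigr).$$
The crucial input, coming from almost regularity of $\yy$ on $M$, is that $H_i(\yy; M)$ has finite length for every $i \geq 1$; this follows by induction on $p$ from the finite length of each $A_k(\yy; M)$ via the standard long exact sequence comparing the Koszul homologies of $y_1, \dots, y_{p-1}$ and of $\yy$. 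Consequently $H_\mideal^s(H_i(\yy; M)) = 0$ for all $s \geq 1$ and $i \geq 1$, so in the strip $t \leq -1$ the page ${}^{\mathrm{II}}\!E_2$ is concentrated on the axis $s = 0$, where ${}^{\mathrm{II}}\!E_2^{0,-i} = H_i(\yy; M)$. All $d_r$-differentials meeting these positions land in bidegrees with either $s \geq 1$ and $t \leq -1$, or $s < 0$, and hence vanish; so spectral sequence II degenerates in the strip and gives
$$H^{-i}\bigl(\mathrm{Tot}(D)\bigr) \;\cong\; H_i(\yy; M) \qquad (i \geq 1).$$

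Applying spectral sequence I, each $H_i(\yy; M) \cong H^{-i}(\mathrm{Tot}(D))$ with $i \geq 1$ acquires a bounded filtration whose successive quotients are the terms ${}^{\mathrm{I}}\!E_\infty^{s,-i-s}$, each a subquotient of
$${}^{\mathrm{I}}\!E_2^{s,-i-s} \;=\; H_{i+s}\bigl(\yy;\, H_\mideal^s(M)\bigr).$$
Only the indices with $H_\mideal^s(M) \neq 0$ and $i + s \leq p$ contribute, i.e.\ $0 \leq s \leq \min\{p-i, d\}$. Taking $K$-dimensions of graded components in degree $j$ and applying the standard subquotient estimate $\dim E_\infty \leq \dim E_2$ gives precisely the asserted inequality.

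The point I expect to need the most care is the degeneration of spectral sequence II in the strip $t \leq -1$; this is exactly where the hypothesis of almost regularity is consumed, via the finite-length property of the higher Koszul homologies. Everything else amounts to formal bookkeeping with the two spectral sequences converging to the hypercohomology of $\mathrm{Tot}(D)$.
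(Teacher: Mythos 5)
Your argument is correct and is essentially the paper's own proof: the same \v{C}ech--Koszul double complex, the same two spectral sequences converging to the hypercohomology of the total complex, the same use of the finite-length property of the higher Koszul homologies (guaranteed by almost regularity) to force degeneration of the second spectral sequence away from the bottom row, and the same subquotient dimension count on the first spectral sequence. The only differences are cosmetic index conventions ($-t$ versus $p-t$).
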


\begin{proof}
We sketch the proof since the result was essentially proved in \cite{Sc}.
Let $C^{\bullet}$ be the \v{C}ech complex and $K_\bullet (\yy;M)$ the Koszul complex of $M$ with respect to $\yy$.
Define the double complex $D^{\bullet \bullet}$ such that $D^{s,t}= C^s \bigotimes_S K_{p-t}(\yy;M)$.
Note that all maps in $D^{\bullet \bullet}$ are homogeneous of degree $0$.
There are two spectral sequences
(we follow the notation of \cite[Section A3]{Ei})
\begin{eqnarray*}
_{\mathrm{vert}}^{\hspace{10pt} 2}E^{s,t}=H_\mideal^s \big(H_{p-t}(\yy;M)\big) &\Rightarrow& H^{s+t}\big(\mathrm{tot}(D^{\bullet \bullet})\big)\\
^{\hspace{8pt} 2}_{\mathrm{hor}}E^{s,t}=H_{p-t}\big(\yy;H_\mideal^s(M) \big) &\Rightarrow& H^{s+t}\big(\mathrm{tot}(D^{\bullet \bullet})\big).
\end{eqnarray*}
By Lemma \ref{4}(i), $H_{p-t}(\yy;M)$ has finite length if $t \ne p$.
Thus, by the basic properties of local cohomology,
$$_{\mathrm{vert}}^{\hspace{10pt} 2}E^{s,t}=0
\ \ \mbox{ if } (s,t) \not\in \big\{(0,0),(0,1),\dots,(0,p),(1,p),(2,p),\dots,(d,p)\big\}$$
and
$$^{\hspace{10pt} 2}_{\mathrm{vert}}E^{0,t}=H_{p-t}(\yy;M)
\ \ \mbox{ for } t=0,1,\dots,p-1.$$
Then this spectral sequence degenerates at $^2E$ and 
$H^k(\mathrm{tot}(D^{\bullet \bullet})) \cong H_{p-k}(\yy;M)$
for $k=0,1,\dots,p-1$.
Since $\dim_K H^k(\mathrm{tot}(D^{\bullet \bullet}))_j \leq \dim_K(\bigoplus_{s+t=k} {}^{\hspace{8pt} 2}_{\mathrm {hor}}E^{s,t})_j$ for all $j \in \ZZ$,
we get the desired inequality.
\end{proof}

\begin{remark}
In \cite{Sc}, Schenzel assumed $p \geq d$ since he also considered an estimate for $H_0(\yy;M)$
and since $H_0(\yy;M) \ne {}^{\hspace{10pt} 2}_{\mathrm {vert}}E^{0,p}$ unless it has finite length.
However, we do not need this assumption since we are assuming $i \geq 1$.
\end{remark}
\pagebreak

The next statement and Lemma \ref{3}(i) prove Theorem \ref{2}.

\begin{proposition}
\label{prop}
With the same notation as in Theorem \ref{2},
there exists an almost regular sequence $\yy=y_1,\dots,y_n$ on $R$ such that
$A_k(\yy;R)_j=0$
for $k=0,1,\dots, d-1$ and $j\leq r-1$.
\end{proposition}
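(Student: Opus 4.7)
The plan is to chain four ingredients already introduced: (a) a Koszul-complex identification realizing $A_k(\yy;R)$ as a quotient of a Koszul homology of $R$, (b) Schenzel's estimate (Lemma~\ref{5}) bounding that Koszul homology in terms of Koszul homologies of the local cohomologies $H_\mideal^\ell(R)$, (c) graded local duality combined with the self-duality of the linear Koszul complex to rewrite those in terms of Koszul homologies of the Ext modules $\Ext_S^{n-\ell}(R,\omega_S)$, and (d) the Aramova--Herzog bound (Lemma~\ref{4}) applied to those Ext modules via the regularity hypothesis.

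First I would choose $\yy=y_1,\dots,y_n$ generically so that it is a $K$-basis of $S_1$ and is simultaneously almost regular on $R$ and on each $\Ext_S^{n-\ell}(R,\omega_S)$ for $\ell=0,\dots,d-1$; each condition cuts out a non-empty Zariski-open subset of $S_1\oplus\cdots\oplus S_1$, so over the infinite field $K$ a single such $\yy$ exists. The inductive long exact sequence in Koszul homology associated with the extension $y_1,\dots,y_k\subset y_1,\dots,y_{k+1}$ contains
$$H_1(y_1,\dots,y_{k+1};R)\;\longrightarrow\;\bigl(R/(y_1,\dots,y_k)R\bigr)(-1)\;\xrightarrow{\,y_{k+1}\,}\;R/(y_1,\dots,y_k)R,$$
from which $A_k(\yy;R)_{j-1}$ is the image of the connecting map in degree $j$, hence a quotient of $H_1(y_1,\dots,y_{k+1};R)_j$. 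It therefore suffices to show $H_1(y_1,\dots,y_{k+1};R)_j=0$ for all $j\leq r$ and all $0\leq k\leq d-1$.

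For that, Schenzel's Lemma~\ref{5} applied to $M=R$, $p=k+1\leq d$, and $i=1$ (so $\min(p-i,d)=k$) yields
$$\dim_K H_1(y_1,\dots,y_{k+1};R)_j \;\leq\; \sum_{\ell=0}^{k}\dim_K H_{1+\ell}\bigl(y_1,\dots,y_{k+1};H_\mideal^\ell(R)\bigr)_j.$$
Combining graded local duality $H_\mideal^\ell(R)\cong\Ext_S^{n-\ell}(R,\omega_S)^{\vee}$ with the self-duality of the linear Koszul complex (which gives $H^i(\yy;N)\cong H_{p-i}(\yy;N)(p)$ for any graded $N$, and so $H_i(\yy;M)^{\vee}\cong H_{p-i}(\yy;M^{\vee})(p)$) rewrites each summand as $\dim_K H_{k-\ell}\bigl(y_1,\dots,y_{k+1};\Ext_S^{n-\ell}(R,\omega_S)\bigr)_{k+1-j}$. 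Since $\reg\Ext_S^{n-\ell}(R,\omega_S)\leq\ell-r$ by hypothesis and $\dim\Ext_S^{n-\ell}(R,\omega_S)\leq\ell$, Aramova--Herzog (Lemma~\ref{4}, case~(i) when $\ell<k$ so $k-\ell\geq 1$, and case~(ii) when $\ell=k$, since the sequence length $k+1$ exceeds the dimension of the Ext module) forces this Koszul homology to vanish in all degrees $>(k-\ell)+(\ell-r)=k-r$. For $j\leq r$, the relevant degree $k+1-j\geq k+1-r>k-r$, so every summand is zero.

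The main obstacle is step~(c): carefully tracking the grading shift $(p)$ from Koszul self-duality together with the convention of graded local duality, so that the final Ext Koszul homology appears in exactly grade $k+1-j$, making the Aramova--Herzog vanishing range $m>k-r$ align with the target range $m\geq k+1-r$. The margin is exactly one degree, so the whole chain succeeds only because these two shifts combine to give the correct alignment; any misstep in the duality bookkeeping would lose the bound.
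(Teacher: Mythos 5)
Your proposal is correct and follows essentially the same route as the paper's own proof: the surjection $H_1(y_1,\dots,y_{k+1};R)_j \twoheadrightarrow A_k(\yy;R)_{j-1}$ (which the paper writes via explicit cycle representatives rather than the Koszul long exact sequence), then Schenzel's estimate, then local duality plus Koszul self-duality landing in internal degree $k+1-j$, then Aramova--Herzog applied with the regularity hypothesis, splitting into the cases $\ell<k$ and $\ell=k$ exactly as you do. The degree bookkeeping, including the one-degree margin you highlight, matches the paper's computation $(k-i)+i+1-j>(k-i)+(i-r)$.
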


\begin{proof}
Let $\yy=y_1,\dots,y_n$ be an almost regular sequence on $R$ such that
\begin{itemize}
\item $\yy$ forms a $K$-basis of $S_1$;
\item $\yy$ is an almost regular sequence on $\Ext^{i}_S(R,\omega_S)$ for all $i$.
\end{itemize}
Observe that there is a natural surjection
\begin{eqnarray*}
\begin{array}{cccc}
H_1(y_1,\dots,y_{k+1};R)_j & \to & A_k(\yy;R)_{j-1}\smallskip\\
{[} f_1 e_1 + \cdots +f_{k+1} e_{k+1}] & \to & [ f_{k+1} ]
\end{array}
\end{eqnarray*}
for all $k=0,1,\dots,n-1$ and $j \in \ZZ$,
where each $f_\ell \in R$ and each $e_\ell$ is a basis element of $K_1(y_1,\dots,y_{k+1};R)$.
It is enough to prove $H_1(y_1,\dots,y_{k+1};R)_j=0$ for all $k=0,1,\dots,d-1$ and $j \leq r$.
Moreover, Lemma \ref{5} shows that it is enough to prove
$$H_{1+i}\big(y_1,\dots,y_{k+1};H_\mideal^i(R)\big)_j=0$$
for all $0 \leq i \leq k \leq d-1$ and for all $j \leq r$.

Fix integers $0 \leq i \leq k \leq d-1$ and $j \leq r$.
We denote by $N^\vee$ the Matlis dual of a graded $S$-module $N$. 
By the local duality and the self duality of the Koszul complex,
\begin{eqnarray*}
H_{1+i}\big(y_1,\dots,y_{k+1};H_\mideal^i(M)\big)_j 
&\cong& H_{1+i}\big(y_1,\dots,y_{k+1};H_\mideal^i(M) \big)^\vee_{-j}\\
&\cong& H^{1+i}\big(y_1,\dots,y_{k+1};\Ext^{n-i}_S(R,\omega_S)\big)_{-j}\\
&\cong& H_{k-i}\big(y_1,\dots,y_{k+1};\Ext^{n-i}_S(R,\omega_S)\big)_{(k-i)+i+1 -j}.
\end{eqnarray*}
(We use
$K_\bullet(y_1,\dots,y_{k+1};N)^\vee 
\cong \mathrm{Hom}_S(K_\bullet(y_1,\dots,y_{k+1};S),N^\vee)$
for the second equation.)
Since $i+1-j>i-r$,
Lemma \ref{4}(i) and the assumption $\reg( \Ext_S^{n-i}(R,\omega_S)) \leq i-r$ show that
the above vector space vanishes when $i <k$.
Also, since the Krull dimension of $\Ext_S^{n-i}(R,\omega_S)$ is smaller than or equal to $i$,
it also vanishes when $i =k$ by Lemma \ref{4}(ii).
\end{proof}

Finally, we prove Theorem \ref{1}.
The next fact is well-known.

\begin{lemma}
\label{pure}
If a simplicial complex $\Delta$ satisfies $(S_r)$ and $r \geq 2$ then $\Delta$ is pure.
\end{lemma}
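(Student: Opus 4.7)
The plan is to argue by contradiction, producing a face of $\Delta$ whose link is disconnected of positive dimension, which will violate $(S_r)$ in homological degree $0$ (and this is available since $r \geq 2$).

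The key move is to choose $F \in \Delta$ of maximal cardinality subject to the condition that $\lk_\Delta(F)$ is not pure. Such an $F$ exists because $\lk_\Delta(\emptyset) = \Delta$ is itself non-pure, and cardinalities of faces are bounded. Setting $\Gamma = \lk_\Delta(F)$, the maximality of $|F|$ guarantees that
\[
\lk_\Gamma(\{v\}) \;=\; \lk_\Delta(F \cup \{v\})
\]
is pure for every vertex $v$ of $\Gamma$. This lets me attach to each vertex $v$ of $\Gamma$ a well-defined integer $s(v) := |G|$, where $G$ is any facet of $\Gamma$ containing $v$: indeed every facet through $v$ has the form $\{v\} \cup G'$ with $G'$ a facet of the pure complex $\lk_\Gamma(\{v\})$, so all such $G$ share a common size.

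Because $\Gamma$ is not pure, there must exist vertices $v_1, v_2 \in \Gamma$ with $s(v_1) \neq s(v_2)$. I would then make the following observation: no edge of $\Gamma$ can join two vertices of distinct $s$-values, since a facet of $\Gamma$ containing such an edge $\{v,w\}$ would simultaneously have size $s(v)$ and $s(w)$. Consequently the $1$-skeleton of $\Gamma$, and therefore $\Gamma$ itself, is disconnected, so $\tilde H_0(\Gamma;K) \neq 0$. At the same time, $\dim \Gamma \geq \max\{s(v_1),s(v_2)\} - 1 \geq 1$ (the larger of two distinct positive integers is at least $2$). With $r \geq 2$ we get $0 < \min\{r-1, \dim \Gamma\}$, and the non-vanishing of $\tilde H_0(\Gamma;K)$ contradicts Serre's condition $(S_r)$ applied at the face $F$.

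The only delicate point is the choice of $F$: it must be large enough that all vertex links of $\Gamma$ are pure (making the $s$-labelling well defined and forcing purity of $\Gamma$ whenever all vertices carry the same label), yet chosen so that $\Gamma$ itself is still non-pure; selecting $F$ to be of maximal cardinality among the non-empty collection of faces with non-pure link accomplishes both at once, and the rest of the argument is bookkeeping with the definitions.
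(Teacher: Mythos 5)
Your proof is correct. It takes the same underlying route as the paper's --- both arguments rest on the two facts that $(S_r)$ with $r\geq 2$ forces every link (including $\Delta$ itself) to be connected, and that links of links are again links --- but you organize it differently. The paper runs an induction on $\dim\Delta$: vertex links inherit $(S_r)$, are pure by the inductive hypothesis, and then the step ``$\Delta$ connected with all vertex links pure $\Rightarrow$ $\Delta$ pure'' is asserted without detail. You instead collapse the induction into a single extremal choice, taking $F$ maximal with $\lk_\Delta(F)$ non-pure, and you apply Serre's condition once, at that $F$, rather than at the empty face of each successive link. Your $s$-labelling of vertices by facet size is precisely a proof of the combinatorial step the paper leaves implicit, so your write-up is in that respect more complete; the paper's induction is shorter on the page but leans on the reader to supply the same argument. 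The boundary cases (a non-pure complex has dimension at least $1$, so $\min\{r-1,\dim\Gamma\}>0$ and $(S_r)$ really does force $\tilde H_0(\Gamma;K)=0$) are handled correctly in your version.
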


\begin{proof}
We use induction on $\dim \Delta$.
Since $\tilde H_0(\Delta;K)=0$, $\Delta$ is connected.
Every $1$-dimensional connected simplicial complex is pure.
Suppose $\dim \Delta>1$.
By the definition of Serre's conditions,
it is clear that $\lk_\Delta(v)$ satisfies ($S_r$) for every vertex $v$ of $\Delta$.
Thus $\lk_\Delta(v)$ is pure by the induction hypothesis.
On the other hand,
since $\Delta$ is connected,
if $\Delta$ is not pure then
there exists a vertex $v$ of $\Delta$
such that $\lk_\Delta(v)$ is not pure.
Thus $\Delta$ must be pure.
\end{proof}

Lemma \ref{pure} is more generally true.
It is known that if $R$ is a Noetherian catenary local ring satisfying ($S_2$) then $R$ is equidimensional (see \cite[Remark 2.4.1]{Ha}).

Before the proof of Theorem \ref{1}, we recall basic facts on Stanley--Reisner rings.
Let $\Delta$ be a simplicial complex on $[n]$.
The \textit{Stanley--Reisner ideal} $I_\Delta \subset S$ of $\Delta$
is the ideal generated by all squarefree monomials $x_F=\prod_{i \in F} x_i \in S$ with $F \not \in  \Delta$.
The ring $K[\Delta]=S/I_\Delta$ is the \textit{Stanley--Reisner ring} of $\Delta$.
It is known that the Krull dimension of $K[\Delta]$ is equal to $\dim \Delta +1$
and the $h$-vector of $K[\Delta]$ coincides with the $h$-vector of $\Delta$ (cf.\ \cite{St2}).
By virtue of Theorem \ref{2},
the next lemma proves Theorem \ref{1}.

\begin{lemma}
\label{sqmodule}
Let $\Delta$ be a $(d-1)$-dimensional simplicial complex on $[n]$ and $r \geq 2$ an integer.
If $\Delta$ satisfies $(S_r)$ then
$\reg (\Ext_S^{n-i}(K[\Delta],\omega_S)) \leq i-r$ for $i=0,1,\dots,d-1$.
\end{lemma}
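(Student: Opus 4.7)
The plan is to combine Hochster's formula for the local cohomology of a Stanley--Reisner ring with graded local duality and the theory of squarefree modules, converting Serre's condition $(S_r)$ into a support bound on the deficiency module $\Ext_S^{n-i}(K[\Delta],\omega_S)$ and then into a regularity bound.

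First I invoke Lemma~\ref{pure}: since $r\geq 2$, the complex $\Delta$ is pure, so $\dim\lk_\Delta(F)=d-1-|F|$ for every face $F$. Next, by Hochster's formula for local cohomology of Stanley--Reisner rings, the $\ZZ^n$-graded component $H^i_\mideal(K[\Delta])_\mathbf{a}$ is nonzero only when $\mathbf{a}\leq 0$, the set $F:=\{j:a_j<0\}$ lies in $\Delta$, and $\tilde H_{i-|F|-1}(\lk_\Delta(F);K)\neq 0$. Serre's condition $(S_r)$ together with purity forces this homology to vanish unless $i-|F|-1\geq \min\{r-1,\,d-1-|F|\}$. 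For $i\leq d-1$ the subcase in which the minimum equals $d-1-|F|$ would force $i\geq d$, so we are left with $|F|\leq i-r$.

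I then pass to $N:=\Ext_S^{n-i}(K[\Delta],\omega_S)$ via graded local duality, which identifies $N$ with the Matlis dual of $H^i_\mideal(K[\Delta])$. By a theorem of Yanagawa, $N$ is a squarefree $\ZZ^n$-graded $S$-module, and the analysis above shows that its multigraded support is contained in $\{\mathbf{c}\in\NN^n:|\mathrm{supp}(\mathbf{c})|\leq i-r\}$.

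Finally, I convert this support bound into a regularity bound via the Koszul complex $K_\bullet(x_1,\dots,x_n;N)$. Because $N$ is squarefree, its multigraded Betti numbers are concentrated in squarefree multidegrees, so it suffices to examine the Koszul homology in multidegrees $\mathbf{1}_F$ with $F\subseteq[n]$. There the $t$-th Koszul term equals $\bigoplus_{G\subseteq F,\,|G|=t}N_{\mathbf{1}_{F\setminus G}}$, which vanishes whenever $|F|-t>i-r$; hence $\beta_{t,t+j}(N)=0$ for all $j>i-r$, giving $\reg(N)\leq i-r$. The main obstacle is the appeal to Yanagawa's squarefreeness theorem for $N$: without knowing that $N$ is squarefree, the support bound coming from Hochster's formula and local duality cannot be converted into a bound on multigraded Betti numbers in the way required, since nonzero Koszul terms could persist in non-squarefree multidegrees of arbitrarily large $\ZZ$-degree.
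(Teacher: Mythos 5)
Your proposal is correct and follows essentially the same route as the paper: purity from Lemma \ref{pure}, Hochster's formula (you state it for $H^i_\mideal(K[\Delta])$ and Matlis-dualize, the paper states it directly for $\Ext_S^{n-i}(K[\Delta],\omega_S)$), Yanagawa's squarefreeness of the deficiency modules to concentrate the multigraded Betti numbers in squarefree degrees, and the same Koszul-complex count showing $\beta_{t,t+j}=0$ for $j>i-r$. The translation of $(S_r)$ into the support bound $|F|\leq i-r$, including the observation that the case $\min\{r-1,\dim\lk_\Delta(F)\}=\dim\lk_\Delta(F)$ is vacuous for $i\leq d-1$, matches the paper's argument.
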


\begin{proof}
Fix an integer $1 \leq i \leq d-1$.
Let $N^i = \Ext_S^{n-i}(K[\Delta],\omega_S)$.
We consider the multigraded structure of $N^i$.
For a $\ZZ^n$-graded $S$-module $M$ and for any monomial $u=x_1^{a_1}\cdots x_n^{a_n}$,
we denote by $M_u$ the graded component of degree $(a_1,\dots,a_n) \in \ZZ^n$ of $M$
and $\beta_{p,u}(M)=\dim_K \Tor_p(M,K)_u$.
By the result of Yanagawa \cite{Ya} and Musta\c{t}\u{a} \cite{Mu},
each $N^i$ is a squarefree module.
Thus it follows from \cite[Corollary 2.4]{Ya}
that the multigraded Betti numbers of $N^i$ are concentrated in squarefree degrees,
that is,
$\beta_{p,q}(N^i)= \sum_{F \subset [n],\ |F|=q} \beta_{p,x_F}(N^i)$ for all $p,q$.
On the other hand, by the Hochster's formula \cite[Theorem 5.3.8]{BH},
$$\dim_K (N^i)_{x_F} = \dim_K \tilde H_{i-1-|F|}\big(\lk_\Delta(F);K\big).$$
Since $\Delta$ is pure, $\dim \lk_\Delta(F)=\dim \Delta -|F|$ for every $F \in \Delta$.
Then Serre's condition ($S_r$) implies that $N^i$ has no squarefree elements of degree $\geq i+1-r$.
Thus 
$$K_p(x_1,\dots,x_n;N^i)_{x_F}=0$$
for all $F \subset [n]$ with $|F|\geq p+i+1-r$.
This fact guarantees
$$\beta_
{p,p+j}(N^i)= \sum_{F \subset [n],\ |F|=p+j} \dim_K H_p(x_1,\dots,x_n;N^i)_{x_F}=0$$
for all $p \geq 0$ and $j \geq i+1-r$.
Hence $\reg\hspace{1pt} N^i \leq i-r$.
\end{proof}

\begin{remark}
Recently, generic initial ideals are of great interest in commutative algebra.
Proposition \ref{prop} also gives some information on generic initial ideals.
Let $\Gin(I)$ be the generic initial ideal (cf.\ \cite[\S 15.9]{Ei}) of a homogeneous ideal $I\subset S$
with respect to the reverse lexicographic order.
It was proved by Conca, Herzog and Hibi \cite{CHH} that,
if $\mathrm{char}(K)=0$ then,
for a generic choice of a sequence $\yy=y_1,\dots,y_n$ of linear forms,
$$
\beta_{i,i+j}\big(S/\Gin(I)\big)=
\sum_{k=0}^{n-i} {n-k-1 \choose i-1}\dim_K A_k(\yy;S/I)_j
$$
for all $ i\geq 1$ and $j \in \ZZ$.
(In positive characteristic, the lefthand side is smaller than or equal to the righthand side.)
Proposition \ref{prop} shows if $\reg(\mathrm{Ext}^{n-i}_S(S/I,\omega_S)) \leq i-r$
for $i = 0,1,\dots,d-1$,
where $d$ is the Krull dimension of $S/I$, then
$$\beta_{i,i+j}\big(S/\Gin(I)\big)=0
\ \ \mbox{ for all } i> n-d \mbox{ and } j \leq r-1.$$ 
In particular, this result deduces information on generic initial ideals from a property of local cohomologies.
It would be interesting to study further relations between generic initial ideals
and local cohomologies.
\end{remark}

\section{Proofs of Theorems \ref{3-5} and \ref{main3}}

In this section, we prove Theorems \ref{3-5} and \ref{main3}.

The next simple nice equations due to Swartz \cite[Proposition 2.3]{Sw}
play a crucial role in the proofs.

\begin{lemma}[Swartz]
\label{3-3}
Let $\Delta$ be a $(d-1)$-dimensional pure simplicial complex on $[n]$.
Then
$$ i h_i(\Delta)+ (d-i+1) h_{i-1}(\Delta) = \sum_{v \in [n]} h_{i-1} \big(\lk_\Delta(v)\big)
\ \ \mbox{ for }i=1,2,\dots,d.$$
\end{lemma}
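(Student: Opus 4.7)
The plan is to translate Swartz's identity into an equivalent identity between $h$-polynomials, then derive it from an elementary double-counting on $f$-vectors. For any $(e-1)$-dimensional simplicial complex $\Gamma$, substituting $x = 1/t$ into the defining relation $\sum_i f_{i-1}(\Gamma)(x-1)^{e-i} = \sum_i h_i(\Gamma) x^{e-i}$ and multiplying through by $t^e$ produces the standard generating-function form
$$h_\Gamma(t) := \sum_i h_i(\Gamma)\, t^i = \sum_{i=0}^{e} f_{i-1}(\Gamma)\, t^i (1-t)^{e-i}.$$
Multiplying Swartz's desired identity by $t^{i-1}$ and summing over $i = 1, \ldots, d$ collapses it into the single equation
$$(1-t)\, h_\Delta'(t) + d\, h_\Delta(t) = \sum_{v \in [n]} h_{\lk_\Delta(v)}(t).$$

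Since $\Delta$ is pure of dimension $d-1$, each link $\lk_\Delta(v)$ is $(d-2)$-dimensional, so the formula above applies uniformly with $e = d-1$ to every $h_{\lk_\Delta(v)}(t)$. Summing over $v$ and using the double-counting identity $\sum_{v \in [n]} f_{i-1}(\lk_\Delta(v)) = (i+1) f_i(\Delta)$, which follows because each face $F \in \Delta$ with $|F| = i+1$ contributes $|F|$ terms on the left (one for each $v \in F$, via $F \setminus \{v\} \in \lk_\Delta(v)$), yields
$$\sum_{v \in [n]} h_{\lk_\Delta(v)}(t) = \sum_{i=0}^{d-1} (i+1) f_i(\Delta)\, t^i (1-t)^{d-1-i}.$$

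For the left-hand side, I would verify term by term that the operator $(1-t)\tfrac{d}{dt} + d$ applied to $t^i (1-t)^{d-i}$ produces
$$i\, t^{i-1}(1-t)^{d-i+1} + \bigl(d-(d-i)\bigr)\, t^i (1-t)^{d-i} = i\, t^{i-1}(1-t)^{d-i}\bigl[(1-t)+t\bigr] = i\, t^{i-1}(1-t)^{d-i},$$
the key cancellation being $(1-t) + t = 1$. Summing this with coefficients $f_{i-1}(\Delta)$ and reindexing $j = i-1$ matches the right-hand side exactly. The argument is essentially routine once the generating-function setup is in place; the only subtle point is that purity is used precisely to ensure all links have the same dimension $d-2$, so that their $h$-polynomials share a common expansion and can be combined into a single generating polynomial on the right-hand side. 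Without this uniformity, the sum could not be organized as a single power of $(1-t)^{d-1-i}$ times $t^i$, and the clean matching with $(1-t)h_\Delta'(t) + d\, h_\Delta(t)$ would fail.
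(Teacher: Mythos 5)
Your argument is correct. Note first that the paper does not prove this lemma at all --- it simply cites Swartz \cite[Proposition 2.3]{Sw} --- so there is no internal proof to compare against; what you have supplied is a self-contained elementary derivation of a quoted result. Your steps all check out: the substitution $x=1/t$ does give $h_\Gamma(t)=\sum_i f_{i-1}(\Gamma)t^i(1-t)^{e-i}$; comparing coefficients of $t^{i-1}$ shows the polynomial identity $(1-t)h_\Delta'(t)+d\,h_\Delta(t)=\sum_v h_{\lk_\Delta(v)}(t)$ is genuinely equivalent to the family of identities for $i=1,\dots,d$ (the $t^d$ coefficient contributes only $0=0$); the incidence count $\sum_v f_{i-1}(\lk_\Delta(v))=(i+1)f_i(\Delta)$ is right; and the operator computation
$$\Bigl((1-t)\tfrac{d}{dt}+d\Bigr)\bigl(t^i(1-t)^{d-i}\bigr)=i\,t^{i-1}(1-t)^{d-i}$$
is verified correctly, with purity entering exactly where you say it does: every vertex lies in a $d$-element facet, so every link is $(d-2)$-dimensional and its $h$-polynomial admits the expansion with $e=d-1$, which is what allows the sum over $v$ to be reassembled. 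This is essentially the standard proof of Swartz's identity, and it is a reasonable addition given that the paper leaves the statement as a black box.
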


\begin{proof}[Proof of Theorem \ref{main3}]
If $r=1$ then $\sum_{k=1}^{d} h_k(\Delta)=f_{d-1}(\Delta)-1 \geq 0$.
Suppose $r \geq 2$.
Then $\Delta$ is pure by Lemma \ref{pure}.
We use induction on the dimension of $\Delta$.
If $d \leq r$ then the statement is obvious since $\Delta$ is Cohen--Macaulay.
Suppose $d> r$.
By Lemma \ref{3-3},
\begin{eqnarray*}
d \left( \sum_{k=r}^d h_k(\Delta) \right)
&=& r h_r(\Delta)+
\sum_{i=r+1}^d\big\{ i h_i(\Delta)+ (d-i+1)h_{i-1}(\Delta)\big\}\\
&=& r h_r(\Delta) +
 \sum_{i=r+1}^d \left( \sum_{v \in [n]} h_{i-1}\big(\lk_\Delta(v)\big) \right)\\
&=& r h_r(\Delta)
+ \sum_{v \in [n]} \left(\sum_{i=r}^{d-1}  h_{i}\big(\lk_\Delta(v)\big) \right).
\end{eqnarray*}
Since $\lk_\Delta(v)$ satisfies ($S_r$) for every vertex $v$ of $\Delta$,
by the induction hypothesis, $\sum_{i=r}^{d-1}  h_{i}(\lk_\Delta(v)) \geq 0$. 
Also, since $\Delta$ satisfies ($S_r$),
$h_r(\Delta) \geq 0$ by Theorem \ref{1}.
Thus the above equation guarantees $h_r(\Delta)+ h_{r+1}(\Delta)+ \cdots + h_d(\Delta) \geq 0$ as desired.
\end{proof}

While Theorem \ref{main3} easily follows from Theorem \ref{1} and Lemma \ref{3-3}, it provides a non-trivial lower bound for the number of facets.
Recall that if $\dim \Delta=d-1$ then $f_{d-1}(\Delta) = \sum_{k=0}^d(h_k(\Delta))$.
Then Theorem \ref{main3} shows

\begin{corollary}
If a simplicial complex $\Delta$ satisfies Serre's condition $(S_r)$ then
$\Delta$ has at least $\sum_{k=0}^{r-1}h_{k}(\Delta)$ facets.
\end{corollary}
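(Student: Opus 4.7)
The plan is to obtain the inequality essentially for free from Theorem \ref{main3} and the defining identity of the $h$-vector. Setting $x=1$ in the identity $\sum_{i=0}^d f_{i-1}(\Delta)(x-1)^{d-i}=\sum_{i=0}^d h_i(\Delta)x^{d-i}$ gives the reminder already recorded just before the corollary, namely $f_{d-1}(\Delta)=\sum_{k=0}^{d}h_k(\Delta)$. Since the number of facets of $\Delta$ is at least $f_{d-1}(\Delta)$ (with equality whenever $\Delta$ is pure), it suffices to prove
\[
f_{d-1}(\Delta)\ \geq\ \sum_{k=0}^{r-1}h_k(\Delta).
\]

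First I would split the sum $f_{d-1}(\Delta)=\sum_{k=0}^{r-1}h_k(\Delta)+\sum_{k=r}^{d}h_k(\Delta)$, and then invoke Theorem \ref{main3} to conclude that the second summand is non-negative. This immediately yields the claim. The two mild edge cases to address are (a) $r=1$, where $\Delta$ need not be pure but the bound reads ``$\Delta$ has at least $h_0=1$ facet'', which is trivial; and (b) $r>d$, in which case $(S_r)$ implies $(S_d)$ so $\Delta$ is Cohen--Macaulay, every $h_k(\Delta)\geq 0$, and the terms with $k\geq d+1$ vanish by the convention $h_k(\Delta)=0$ for $k>d$, so the inequality again follows from the same splitting argument.

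There is essentially no obstacle: Theorem \ref{main3} does all of the work, and the corollary is a formal consequence of it together with the elementary identity relating $f_{d-1}(\Delta)$ to the entries of $h(\Delta)$.
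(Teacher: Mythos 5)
Your proposal is correct and coincides with the paper's own (very brief) argument: the paper likewise writes $f_{d-1}(\Delta)=\sum_{k=0}^{d}h_k(\Delta)$ and drops the non-negative tail $\sum_{k\geq r}h_k(\Delta)$ supplied by Theorem \ref{main3}. Your extra remarks on the edge cases $r=1$ and $r>d$, and on facets versus top-dimensional faces, are harmless refinements of the same proof.
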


Next, we prove Theorem \ref{3-5}.
We need the following fact on graded Betti numbers.

\begin{lemma}
\label{3-1}
Let $y \in S_1$ be a linear form and $S'=S/yS$.
Let $M$ be a finitely generated graded $S$-module.
If $(0:_M y)_j=0$ for $j \leq r$ then
$$\dim_K \Tor_i^S (M,K)_{i+j} \leq \dim_K \Tor_i^{S'}(M/yM,K)_{i+j}
\ \ \mbox{ for all }i \geq 0 \mbox{ and }j \leq r.$$
\end{lemma}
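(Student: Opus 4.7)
The plan is to use a standard change-of-rings comparison to relate Tor over $S$ and Tor over $S'$, and then to exploit the degree-gap hypothesis on $(0:_M y)$ to kill the correction term.

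Since $y \in S_1$ is a non-zero-divisor on $S$, the two-term resolution $0 \to S(-1) \xrightarrow{y} S \to S' \to 0$ gives
$$\Tor_0^S(S',M) = M/yM, \quad \Tor_1^S(S',M) = (0:_M y)(-1), \quad \Tor_q^S(S',M) = 0 \ \text{ for } q \ge 2.$$
Plugging these into the change-of-rings spectral sequence
$$E^2_{p,q} = \Tor_p^{S'}\big(\Tor_q^S(S',M), K\big) \ \Longrightarrow \ \Tor_{p+q}^S(M, K),$$
only the two rows $q = 0, 1$ are nonzero, so the standard two-row argument produces a long exact sequence whose relevant piece reads
$$\Tor_{i-1}^{S'}\big((0:_M y)(-1), K\big) \to \Tor_i^S(M, K) \to \Tor_i^{S'}(M/yM, K).$$
Extracting the graded component in degree $i+j$ therefore yields
$$\dim_K \Tor_i^S(M, K)_{i+j} \ \le \ \dim_K \Tor_i^{S'}(M/yM, K)_{i+j} + \dim_K \Tor_{i-1}^{S'}\big((0:_M y)(-1), K\big)_{i+j},$$
so the proof reduces to showing the second summand vanishes whenever $j \le r$.

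This is exactly where the hypothesis enters: $(0:_M y)_t = 0$ for $t \le r$ forces $(0:_M y)(-1)$ to vanish in degrees $\le r+1$, and hence to be $S'$-generated in degrees $\ge r+2$. A standard minimality argument on a minimal graded free resolution over $S'$ then shows that the $(i-1)$-st free module in this resolution has its generators in degrees $\ge (r+2)+(i-1) = r+i+1$, so $\Tor_{i-1}^{S'}\big((0:_M y)(-1), K\big)_t$ vanishes for every $t \le r+i$. Taking $t = i+j$ with $j \le r$ gives exactly the vanishing required.

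The main delicacy to keep track of is the internal degree shift $(-1)$ coming from the $S$-resolution of $S'$ and its propagation through the spectral sequence; the two-row change-of-rings spectral sequence and the ``generated in degree $\ge d_0$ implies $p$-th syzygies in degrees $\ge d_0+p$'' principle for graded modules are both standard.
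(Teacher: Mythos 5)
Your argument is correct and is essentially the paper's own proof: the change-of-rings spectral sequence $\Tor_p^{S'}(\Tor_q^S(S',M),K)\Rightarrow\Tor_{p+q}^S(M,K)$ you invoke is exactly the spectral sequence the paper realizes concretely via the double complex $K_\bullet(\yy';S)\otimes_S K_\bullet(y;M)$, with the same two nonzero rows $M/yM$ and $(0:_M y)(-1)$ and the same identification of the correction term as $\Tor_{i-1}^{S'}\bigl((0:_M y)(-1),K\bigr)_{i+j}$. Your degree bookkeeping (killing that term because $(0:_M y)$ lives in degrees $\geq r+1$) matches the paper's, which phrases the same vanishing through the grading of the Koszul complex.
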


\begin{proof}
Choose a sequence $\yy'=y_1,\dots,y_{n-1}$ of linear forms such that $y,y_1,\dots,y_{n-1}$ forms a $K$-basis of $S_1$.
Consider the double complex $K_\bullet(\yy';S) \bigotimes_S K_\bullet(y;M)$.
Since its total complex is isomorphic to $K_\bullet (y_1,\dots,y_{n-1},y;M)$,
there is a spectral sequence
$$^2E_{s,t}= H_s \big(\yy';H_t(y;M) \big) \Rightarrow H_{s+t}(y_1,\dots,y_{n-1},y;M) \cong \Tor_{s+t}^S(M,K).$$
Fix an integer $j \leq r$.
By the assumption, $H_i(\yy';(0:_M y))_{i+j} =0$ for all $i \geq 0$.
Then the above spectral sequence implies
\begin{eqnarray*}
\dim_K \Tor_i^S(M,K)_{i+j} 
\!\! &\leq &\!\!
\dim_K \left( H_i\big(\yy';H_0(y;M) \big)_{i+j} \bigoplus H_{i-1}\big(\yy';H_1(y;M)\big)_{i+j} \right)\\
\!\! &=&\!\!
\dim_K \left( H_i(\yy';M/yM )_{i+j} \bigoplus H_{i-1}\big(\yy'; (0:_M y)(-1)\big)_{i+j} \right)\\
\!\! &=&\!\!
\dim_K \Tor_i^{S'}(M/yM,K)_{i+j}
+ \dim_K H_{i-1}\big(\yy'; (0:_M y) \big)_{i-1+j}\\
\!\! &=&\!\!  
\dim_K \Tor_i^{S'}(M/yM,K)_{i+j}
\end{eqnarray*}
as desired. 
\end{proof}

\begin{corollary}
\label{3-2}
Let $M$ be a finitely generated graded $S$-module of Krull dimension $d$
and $\Theta=\theta_1,\dots,\theta_d$ a linear system of parameters of $M$.
If $A_k(\Theta;M)_j=0$ for all $k=0,1,\dots,d-1$ and $j \leq r$ then
$\beta_{i,i+j}(M)=0$ for all $i>n-d$ and $j \leq r$.
\end{corollary}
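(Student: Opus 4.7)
The plan is to iterate Lemma \ref{3-1} along the regular-up-to-finite-length sequence $\Theta=\theta_1,\dots,\theta_d$ and thereby compare the Betti numbers of $M$ over $S$ with those of $M/\Theta M$ over the quotient polynomial ring $S/\Theta S$, which has Krull dimension $n-d$ and hence bounds projective dimensions by $n-d$.

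More precisely, set $S^{(0)}=S$ and $M^{(0)}=M$, and for $k=1,\dots,d$ put $S^{(k)}=S^{(k-1)}/\theta_k S^{(k-1)}$ and $M^{(k)}=M^{(k-1)}/\theta_k M^{(k-1)}$. Since $\theta_1,\dots,\theta_d$ is a linear system of parameters and $K$ is infinite, each $\theta_k$ is (the image of) a linear form in a linear coordinate system of $S$, so each $S^{(k)}$ is a polynomial ring in $n-k$ variables, and $\theta_{k+1}$ becomes a linear form of $S^{(k)}$. The hypothesis $A_k(\Theta;M)_j=0$ for $j\leq r$ says exactly that $(0:_{M^{(k)}} \theta_{k+1})_j=0$ for $j\leq r$, which is the input required by Lemma \ref{3-1} applied to the pair $(M^{(k)},\theta_{k+1})$ over $S^{(k)}$.

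Applying Lemma \ref{3-1} at each stage therefore yields, for every $i\geq 0$ and every $j\leq r$,
\[
\beta_{i,i+j}(M)=\dim_K \Tor^S_i(M,K)_{i+j}\ \leq\ \dim_K \Tor^{S^{(1)}}_i(M^{(1)},K)_{i+j}\ \leq\ \cdots\ \leq\ \dim_K \Tor^{S^{(d)}}_i(M^{(d)},K)_{i+j}.
\]
Since $S^{(d)}$ is a polynomial ring in $n-d$ variables, any finitely generated graded $S^{(d)}$-module has projective dimension at most $n-d$, so $\Tor^{S^{(d)}}_i(M^{(d)},K)=0$ for $i>n-d$. Chaining this with the inequality above gives $\beta_{i,i+j}(M)=0$ for all $i>n-d$ and all $j\leq r$, as required.

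The only delicate point is verifying that the hypothesis of Lemma \ref{3-1} remains intact after each reduction: one must recognize that the module $(0:_{M^{(k)}} \theta_{k+1})$ appearing in the assumption of Lemma \ref{3-1} at the $(k{+}1)$-st step is literally the module $A_k(\Theta;M)$ as defined in Section 2, so that the single assumption "$A_k(\Theta;M)_j=0$ for all $k$ and $j\leq r$" supplies simultaneously all the inputs needed for the $d$-fold iteration. Once this bookkeeping is clear, the argument is a direct inductive application of Lemma \ref{3-1} terminating in the trivial vanishing of $\Tor$ beyond the projective dimension bound of the final polynomial ring.
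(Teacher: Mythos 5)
Your proof is correct and follows essentially the same route as the paper: the paper's own argument also reduces modulo $\Theta$ via Lemma \ref{3-1} and then invokes the vanishing of $\Tor_i^{S/\Theta S}(M/\Theta M,K)$ for $i>n-d$ over the $(n-d)$-variable polynomial ring $S/\Theta S$. The only difference is that you spell out explicitly the $d$-fold iteration of Lemma \ref{3-1} (one linear form at a time, with $A_k(\Theta;M)$ supplying the hypothesis at each stage), which the paper compresses into a single citation of the lemma.
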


\begin{proof}
Since $S/\Theta S \cong K[x_1,\dots,x_{n-d}]$ as graded rings,
$\Tor_i^{S/\Theta S}(M/\Theta M,K) =0 $ for $i >n-d$.
By Lemma \ref{3-1},
$\dim_K \Tor_i^S (M,K)_{i+j} \leq \dim_K \Tor_i^{S/\Theta S}(M/\Theta M,K)_{i+j}=0$ for all $i > n-d$ and $j \leq r$, as desired.
\end{proof}

\begin{proof}[Prooof of Theorem \ref{3-5}]
Since $\Delta$ satisfies ($S_t$),
it is enough to consider the case when $r=t$.
We will prove that if $h_{r}(\Delta)=0$ and $\Delta$ satisfies ($S_r$) then $h_{r+1}(\Delta)=0$ and $\Delta$ satisfies ($S_{r+1}$).
If $h_1(\Delta)=0$ then $\Delta$ is generated by a simplex.
Suppose $r \geq 2$.
We use induction on the dimension of $\Delta$.
If $\dim \Delta \leq r-1$ then there is nothing to prove.
Suppose  $\dim \Delta \geq r$.

Since $\lk_\Delta(v)$ satisfies ($S_r$),
by Theorem \ref{1} and Lemma \ref{3-3},
\begin{eqnarray}
\label{3.1}
(r+1) h_{r+1}(\Delta) = \sum_{v \in [n]} h_r \big(\lk_\Delta(v) \big) \geq 0.
\end{eqnarray}
Let $R=K[\Delta]$.
By Proposition \ref{prop} and Lemma \ref{sqmodule}, there exists a linear system of parameters $\Theta=\theta_1,\dots,\theta_d$ of $R$, where $d$ is the Krull dimension of $R$, such that
$A_k(\Theta;R)_j =0$ for $k=0,1,\dots,d-1$ and $j \leq r-1$.
Then Lemma \ref{3}(i) shows $\dim_K (R/\Theta R)_r=h_r(\Delta)=0$.
This fact implies $\dim_K (R/\Theta R)_{r+1}=0$.
Then, by Lemma \ref{3}(ii),
\begin{eqnarray}
\label{3.2}
h_{r+1}(\Delta) = -\sum_{k=0}^{d-1} \dim_K A_k(\Theta;R)_{r} \leq 0.
\end{eqnarray}
By (\ref{3.1}) and (\ref{3.2}), we have
\begin{eqnarray}
\label{3.3}
h_{r+1}(\Delta) = h_r \big(\lk_\Delta(v) \big) = \dim_K A_k(\Theta;R)_{r} = 0
\end{eqnarray}
for all $v \in [n]$ and $k=0,1,\dots,d-1$.

Next, we prove that $\Delta$ satisfies ($S_{r+1}$).
Since $\lk_\Delta(v)$ satisfies ($S_r$) and $h_{r}(\lk_\Delta(v))=0$,
by the induction hypothesis, $\lk_\Delta(v)$ satisfies ($S_{r+1}$).
Thus, for every non-empty face $F \in \Delta$,
$\tilde H_k(\lk_\Delta(F);K)=0$ for $k < \min\{r, \dim \lk_\Delta(F) \}$.
It remains to prove $\tilde H_{r-1}(\Delta;K)=0$.
By (\ref{3.3}),
$A_k(\Theta;M)_j=0$ for all $k=0,1,\dots,d-1$ and $j \leq r$.
Then Corollary \ref{3-2} implies
$$\beta_{i,i+r}\big(K[\Delta]\big)=0 \ \ \mbox{ for } i > n-d.$$
Then, it follows from the Hochster's formula for graded Betti numbers \cite[Theorem 5.5.1]{BH} that
$\tilde H_{r-1}(\Delta;K)=\beta_{n-r,n}(K[\Delta])=0$.
Hence $\Delta$ satisfies ($S_{r+1}$).
\end{proof}

Finally we give a few examples.

\begin{example}
The simplicial complex generated by $\{ \{1,2,3\}, \{3,4,5\}\}$ is pure and connected,
however, does not satisfy ($S_2$) since its link with respect to the vertex $\{3\}$ is not connected.
The simplicial complex $\Delta$ generated by
\begin{eqnarray*}
\left\{
\begin{array}{ll}
\{1,2,3,5\},\{1,2,4,5\},\{1,2,4,6\},\{1,3,4,5\},\{1,3,4,6\}\\
\{1,3,5,6\},\{2,3,4,5\},\{2,3,5,6\},\{2,4,5,6\}\\
\end{array}
\right\}
\end{eqnarray*}
satisfies $(S_2)$ but does not satisfy $(S_3)$ since $\tilde H_1(\lk_\Delta(\{6\});K) \ne 0$.
The simplicial complex $\Gamma$ generated by
\begin{eqnarray*}
\left\{
\begin{array}{ll}
\{1,2,3,5\},\{1,2,4,5\},\{1,2,4,6\},\{1,3,4,5\},\{1,3,4,6\}\\
\{1,3,5,6\},\{2,3,4,6\},\{2,3,5,6\},\{2,4,5,6\}
\end{array}
\right\}
\end{eqnarray*}
satisfies $(S_3)$ but is not Cohen--Macaulay since $\tilde H_2(\Gamma;K) \ne 0$.
\end{example}

\begin{example}
A triangulation $\Delta$ of a compact topological manifold
satisfies Serre's condition ($S_r$) if and only if
$\tilde H_{i}(\Delta;K)=0$ for all $i < r-1$.
For example, a triangulation $\Gamma$ of a $2$-dimensional torus satisfies ($S_2$).
Its suspension 
$\Sigma (\Gamma)=\{ G \cup F: G \subset \{u,v\},\ G \ne \{u,v\},\  F \in \Gamma\}$, where $u$ and $v$ are vertices which are not in $\Gamma$,
is no longer a manifold, but still satisfies ($S_2$).
\end{example}

\begin{example}
A ($d-1$)-dimensional \textit{pseudomanifold} is a $(d-1)$-dimensional pure simplicial complex $\Delta$
satisfying that
\begin{itemize}
\item every ($d-2$)-dimensional face of $\Delta$ is contained in exactly two
facets of $\Delta$;
\item $\Delta$ is \textit{strongly connected}, that is,
for every pair of facets $F$ and $F'$ of $\Delta$,
there exists a sequence of facets $F=F_0,F_1,\dots,F_t=F'$ such that
$|F_k \cap F_{k+1}| = d-1$ for $k=0,1,\dots,t-1$.
\end{itemize}
Pseudomanifolds which satisfy Serre's condition ($S_2$) are known as \textit{normal} pseudomanifolds (see \cite[Section 10]{Ka}).
\end{example}

\section{Monomial ideals and bounds of graded Betti numbers}

The study of graded Betti numbers of monomial ideals is one of current trends in combinatorial commutative algebra.
In this section, we generalize Theorems \ref{1}, \ref{3-5} and \ref{main3}
to monomial ideals,
and consider some upper bounds of graded Betti numbers for monomial ideals
in terms of Serre's conditions.

We first recall an algebraic definition of Serre's condition ($S_r$).
Let $I \subset S$ be a homogeneous ideal.
We say that $R=S/I$ satisfies \textit{Serre's condition} ($S_r$) if
$$\mathrm{depth}\hspace{1pt} R_P \geq \min\{r, \dim R_P\}$$
for all homogeneous prime ideals $P \supset I$.
The first goal of this section is to prove the next theorem.
\begin{theorem}
\label{4-2}
Let $I \subset S$ be a monomial ideal.
If $S/I$ satisfies $(S_r)$ then
\begin{itemize}
\item[(i)] $h_k(S/I) \geq 0$ for $k =0,1,\dots, r$ and $\sum_{k \geq r} h_k(S/I) \geq 0$.
\item[(ii)] if $h_t(S/I)=0$ for some $ t \leq r$ then
$h_k(S/I)=0$ for all $ k \geq t$ and $S/I$ is Cohen--Macaulay.
\end{itemize}
\end{theorem}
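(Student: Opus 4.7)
My plan is to polarize $I$, reducing to the squarefree (Stanley--Reisner) case already handled in Sections 2 and 3, and then invoke Theorems \ref{1}, \ref{main3}, and \ref{3-5} on the resulting simplicial complex. For each $i$, let $a_i$ be the largest exponent of $x_i$ in a minimal generator of $I$, form the polarization $I^{\pol}\subset T = K[x_{i,j} : 1 \le i \le n,\ 1 \le j \le a_i]$, and write $T/I^{\pol}=K[\Delta]$ for the corresponding simplicial complex $\Delta$. Setting $N = \sum_i (a_i-1)$, the linear forms $\theta = \{x_{i,j}-x_{i,1} : 2 \le j \le a_i\}$ form a regular sequence on $K[\Delta]$ with $K[\Delta]/(\theta)\cong S/I$. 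The attendant short exact sequences give $F(S/I,\lambda)=(1-\lambda)^N F(K[\Delta],\lambda)$ and $\dim K[\Delta]=\dim S/I + N$, whence $h(S/I)=h(K[\Delta])=h(\Delta)$. The statements of the theorem thus reduce to statements about $h(\Delta)$.

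The hard part, and main obstacle, is the claim that polarization preserves Serre's condition: $\Delta$ satisfies $(S_r)$ combinatorially whenever $S/I$ satisfies $(S_r)$ algebraically. This is a known property of polarization in the monomial-ideal literature, though its proof takes some work. Polarization identifies $S/I$ as the quotient of $K[\Delta]$ by the regular linear forms $\theta$, and while $(S_r)$ is not automatically preserved under an arbitrary quotient by a regular sequence, the lift from $S/I$ to $K[\Delta]$ is most cleanly handled via the $\Ext$-regularity reformulation used in Theorem \ref{2} and Lemma \ref{sqmodule}: one exploits that polarization is compatible with the formation of $\Ext^{n-i}_S(R,\omega_S)$, so that the inequality $\reg(\Ext^{n-i}_S(R,\omega_S))\le i-r$ transfers between $S/I$ and $T/I^{\pol}$ in the relevant range. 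Establishing this preservation is the substantive part of the proof.

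Granting $(S_r)$ for $\Delta$, the rest is immediate. For part (i), the non-negativity of $h_k(\Delta)$ for $k\le r$ is Theorem \ref{1} and the non-negativity of $\sum_{k\ge r}h_k(\Delta)$ is Theorem \ref{main3}. The $h$-vector vanishing in part (ii) is Theorem \ref{3-5}. For the Cohen--Macaulay conclusion in (ii), Theorem \ref{3-5} also makes $\Delta$, hence $K[\Delta]$, Cohen--Macaulay; since $\theta$ is a $K[\Delta]$-regular sequence with $K[\Delta]/(\theta)\cong S/I$, and modding out by a regular sequence preserves Cohen--Macaulayness, $S/I$ is Cohen--Macaulay as required.
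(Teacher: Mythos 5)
Your overall strategy---polarize $I$, identify $T/I^{\pol}$ with a Stanley--Reisner ring $K[\Delta]$, check that $h(S/I)=h(\Delta)$, and then invoke Theorems \ref{1}, \ref{main3} and \ref{3-5}---is exactly the paper's strategy, and your handling of the $h$-vector identification and of the final Cohen--Macaulay transfer (modding out the regular sequence $\theta$) is fine. The problem is that the one step you yourself flag as ``the substantive part of the proof,'' namely that $\Delta$ satisfies $(S_r)$ whenever $S/I$ satisfies the algebraic condition $(S_r)$, is not actually proved: you assert it is known and sketch a transfer of the bound $\reg(\Ext^{n-i}_S(R,\omega_S))\le i-r$ across polarization. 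That sketch does not work as stated. The hypothesis you are given is the local condition $\mathrm{depth}\,R_P\ge\min\{r,\dim R_P\}$, not a regularity bound on $\Ext$ modules; for a non-squarefree monomial ideal the modules $\Ext^{n-i}_S(S/I,\omega_S)$ need not be squarefree modules, so the route of Lemma \ref{sqmodule} (Yanagawa--Musta\c{t}\u{a} plus Hochster's formula) that converts $(S_r)$ into the regularity bound is unavailable on the $S/I$ side. Moreover, ``polarization is compatible with $\Ext^{n-i}(-,\omega_S)$'' is not automatic: from $\theta$ regular on $T$ and on $K[\Delta]$ one gets $\Ext^j_S(R,S)\cong H^j\bigl(\Hom_T(F_\bullet,T)\otimes_T S\bigr)$, and extracting $\Ext^j_T(K[\Delta],T)\otimes_T S$ from this requires knowing $\theta$ is regular on those $\Ext$ modules, which is not given. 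So the key implication is left unestablished.

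The paper closes this gap with a direct argument that is worth comparing against your plan. It reduces, via the depth formula $\mathrm{depth}\,K[\Delta]=\min\{i:\tilde H_{i-1-|F|}(\lk_\Delta(F);K)\ne 0 \mbox{ for some } F\}$, to showing $\mathrm{depth}\,K[\lk_\Delta(F)]\ge\min\{r,\dim\lk_\Delta(F)+1\}$ for every face $F$. Writing $F=F_1\cup\cdots\cup F_s\cup V_{s+1}\cup\cdots\cup V_n$ with each $F_k\subsetneq V_k$, one identifies $\lk_\Delta(V_{s+1}\cup\cdots\cup V_n)$ with the complex $\Gamma$ of the polarization $J$ of the localized ideal $I'=(I:x_{s+1}^N\cdots x_n^N)\cap K[x_1,\dots,x_s]$; the hypothesis $(S_r)$ applied to the prime $P=(x_1,\dots,x_s)$ gives $\mathrm{depth}(S'/I')\ge\min\{r,\dim S'/I'\}$, the Auslander--Buchsbaum bookkeeping gives $\mathrm{depth}\,K[\Gamma]=\mathrm{depth}(S'/I')+s(N-1)$, and since $|F_1\cup\cdots\cup F_s|\le s(N-1)$ the inequality $\mathrm{depth}\,K[\lk_\Gamma(F_1\cup\cdots\cup F_s)]\ge\mathrm{depth}\,K[\Gamma]-|F_1\cup\cdots\cup F_s|$ from Lemma \ref{4-1} finishes the non-Cohen--Macaulay case (the Cohen--Macaulay case being immediate). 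You would need to supply an argument of this kind---or a correct substitute---for your proof to be complete.
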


To prove the above theorem,
we need the technique, called \textit{polarization},
which associate with each monomial ideal $I$ another squarefree monomial $I^{\pol}$
having the same graded Betti numbers.

\begin{definition}
\label{pola}
Let $I \subset S$ be a monomial ideal
and $G(I)$ the unique set of minimal monomial generators of $I$.
For each monomial $u=x_1^{a_1} \cdots x_n^{a_n}$,
write $\nu (u)=\max \{a_1,\dots,a_n\}$.
Set $N= \max\{ \nu (u): u \in G(I)\}$.
Let $V_k=\{x_{k,1},\cdots,x_{k,N}\}$, where $k=1,2,\dots,n$,
be sets of variables and let $A=K[V]$ be the polynomial ring with the set of
variables $V=\bigcup_{k=1}^n V_k$.
For any monomial $u =x_1^{a_1} \cdots x_n^{a_n} \in S$
with $\nu(u) \leq N$,
let
$$\pol (u) = \prod_{1 \leq k \leq n,\ a_k \ne 0} (x_{k,1} \cdots x_{k,a_k}) \in A.$$
The squarefree monomial ideal $I^{\pol} \subset A$ generated by
$\{ \pol(u) : u \in G(I)\}$ is called the \textit{polarization} of $I$.
It is known that $I$ and $I^{\pol}$ have the same graded Betti numbers,
say, $\beta_{i,j}^S(S/I)= \beta_{i,j}^{A}(A/I^{\pol})$ for all $i,j$ (cf.\ \cite[Lemma 4.2.16]{BH}),
where $\beta_{i,j}^{A}(A/I^{\pol})=\dim_K \Tor_i^A(A/I^{\pol},K)_j$ are the graded Betti numbers of $A/I^{\pol}$ over $A$.
In particular, by the Auslander-Buchsbaum formula \cite[Theorem 1.3.3]{BH}
\begin{eqnarray*}
\label{4.2}
\mathrm{depth}(A/I^{\pol})= \mathrm{depth}(S/I) +n(N-1).
\end{eqnarray*}
\end{definition}

We also need the following easy fact.

\begin{lemma}
\label{4-1}
Let $\Delta$ be a simplicial complex on $[n]$.
For every vertex $v$ of $\Delta$ one has
$\mathrm{depth}(K[\lk_\Delta(v)]) \geq \mathrm{depth}(K[\Delta]) -1$. 
\end{lemma}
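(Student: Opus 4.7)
The plan is to apply Hochster's formula for the local cohomology of a Stanley--Reisner ring, which states that $H_\mideal^i(K[\Delta])$ vanishes in positive $\ZZ$-degrees and satisfies
$$\dim_K H_\mideal^i(K[\Delta])_{-\alpha} = \dim_K \tilde H_{i-|F|-1}\bigl(\lk_\Delta(F);K\bigr)$$
for every $\alpha \in \NN^n$ with $F = \mathrm{supp}(\alpha)$. Combined with the standard identification $\mathrm{depth}\, K[\Delta] = \min\{i : H_\mideal^i(K[\Delta]) \ne 0\}$, this yields the purely combinatorial description
$$\mathrm{depth}\, K[\Delta] = \min\bigl\{i : \tilde H_{i-|F|-1}(\lk_\Delta(F);K) \ne 0 \text{ for some } F \in \Delta\bigr\}.$$

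I will set $d_v := \mathrm{depth}\, K[\lk_\Delta(v)]$ and apply this characterization to the complex $\lk_\Delta(v)$ itself. This produces a face $G \in \lk_\Delta(v)$ with $\tilde H_{d_v - |G| - 1}(\lk_{\lk_\Delta(v)}(G);K) \ne 0$. The key elementary identity, immediate from the definitions of the link operation, is
$$\lk_{\lk_\Delta(v)}(G) = \lk_\Delta(G \cup \{v\}).$$
Setting $F := G \cup \{v\} \in \Delta$, so that $|F| = |G| + 1$, I rewrite the nonvanishing as
$$\tilde H_{(d_v+1) - |F| - 1}\bigl(\lk_\Delta(F);K\bigr) \ne 0,$$
and feed this witness into the combinatorial description of depth for $\Delta$ above to conclude $\mathrm{depth}\, K[\Delta] \leq d_v + 1$, which is exactly the claimed inequality.

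The main technical point is keeping the index arithmetic consistent: the passage from $G$ to $F = G \cup \{v\}$ increases the face cardinality by one, while simultaneously the witness moves from $H_\mideal^{d_v}$ of the link up to $H_\mideal^{d_v+1}$ of $\Delta$ (so the homological subscript of the $\tilde H$-term drops by one). These two shifts cancel precisely, which is what produces the ``$-1$'' in the statement. Beyond this bookkeeping the argument is automatic, so I do not anticipate any substantial obstacle once Hochster's formula is invoked.
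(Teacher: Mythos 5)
Your proposal is correct and follows exactly the route of the paper: both rest on Hochster's formula expressing $\mathrm{depth}\,K[\Delta]$ as $\min\{i : \tilde H_{i-1-|F|}(\lk_\Delta(F);K) \ne 0 \mbox{ for some } F \in \Delta\}$, with the paper leaving the remaining step as "immediate" while you spell out the witness $G \mapsto F = G \cup \{v\}$ and the identity $\lk_{\lk_\Delta(v)}(G) = \lk_\Delta(G \cup \{v\})$. The index bookkeeping checks out, so nothing further is needed.
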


\begin{proof}
The Hochster's formula \cite[Theorem 5.3.8]{BH} says
\begin{eqnarray}
\label{4.1}
\begin{array}{lll}
\mathrm{depth}\big( K[\Delta]\big)&=&
\min\big\{i: H^i_\mideal \big(K[\Delta]\big) \ne 0\big\} \medskip\\
&=&
\min\big\{i: \tilde H_{i-1-|F|}\big(\lk_\Delta (F);K\big) \ne 0 \mbox{ for some }F \in \Delta\big\}.
\end{array}
\end{eqnarray}
The above equation immediately implies the desired inequality.
\end{proof}

\begin{proof}[Proof of Theorem \ref{4-2}]
Without loss of generality,
we assume that $I$ has no elements of degree $1$.
Let $I^{\pol} \subset A$ be the polarization of $I$ defined in Definition \ref{pola}.
Since $I^{\pol}$ is a squarefree monomial ideal, there exists a simplicial complex $\Delta$ whose Stanley--Reisner ideal is equal to $I^{\pol}$.
Since $S/I$ and $A/I^{\pol}$ have the same graded Betti numbers,
they have the same $h$-vector and $S/I$ is Cohen--Macaulay if and only if $A/I^{\pol}$ is Cohen--Macaulay.
Thus it is enough to prove that $\Delta$ satisfies ($S_r$).

We regard $\Delta$ as a simplicial complex with the vertex set
$V=\bigcup_{k=1}^n V_k$, where $V_k=\{x_{k,1},\dots,x_{k,N}\}$.
%
%
Let $F \subset V$ be a face of $\Delta$.
To prove Theorem \ref{4-2}, by (\ref{4.1}), it is enough to prove
$$\mathrm{depth} \big(K[\lk_\Delta(F)]\big) \geq \min\big\{r, \dim \lk_\Delta (F)+1\big\}.$$
Write $F=F_1 \cup F_2 \cup \cdots \cup F_n$ such that each $F_k \subset V_k$.
We may assume that $F$ is a subset of the form
$$F=F_1 \cup \cdots \cup F_s \cup V_{s+1} \cup \cdots \cup V_n$$
where $1 \leq s \leq n$ and $F_k$ is a proper subset of $V_k$ for $k =1,\dots,s$.

Let $S'=K[x_1,\dots,x_s]$
and $I'=(I:x_{s+1}^N \cdots x_n^N) \cap S'$.
Thus $I'$ is the monomial ideal of $S'$ whose generators are obtained from $G(I)$
by substituting $x_k$ by $1$ for $k=s+1,\dots,n$.
Let $P=(x_1,\dots,x_s)$.
Then
$$ \mathrm{depth}(S'/I')= \mathrm{depth}(S/I)_P
\geq \min\big\{r,\dim (S/I)_P\big\}
= \min\big\{r,\dim (S'/I')\big\}.$$
Let $J \subset B=K[\bigcup_{k=1}^s V_k]$ be the monomial ideal generated
by $\{ \pol(u):u \in G(I')\}$.
Consider the set $\Gamma$ of all squarefree monomials in $B$ which are not
in $J$.
We regard $\Gamma$ as a simplicial complex by identifying squarefree monomials with sets of variables.
Since $J$ is a squarefree monomial ideal,
$J$ is generated by the monomials in $G(I_\Gamma)$ and the set of variables of $B$ which are not in $\Gamma$.
Thus $\mathrm{depth}(K[\Gamma])=\mathrm{depth}(B/J)$.
Since $J$ is the polarization of $I'$,
$$\mathrm{depth}\big(K[\Gamma]\big)=\mathrm{depth}(B/J)=
\mathrm{depth}(S'/I') + s(N-1).$$
Also, by the constructions of $\Delta$ and $\Gamma$,
a routine computation implies 
$$\lk_\Delta(V_{s+1}\cup \cdots \cup V_n)=\Gamma.$$
Hence $\lk_\Delta(F)=\lk_\Gamma(F_1\cup \cdots \cup F_s)$.
Then what we must prove is
$$\mathrm{depth}\big(K[\lk_\Gamma(F_1\cup \cdots \cup F_s)]\big) \geq \min\big\{r,\dim \lk_\Gamma(F_1\cup \cdots \cup F_s)+1 \big\}.$$

Suppose $\mathrm{depth}(S'/I') = \dim (S'/I')$.
Then $S'/I'$ is Cohen--Macaulay.
Since polarization does not change the Cohen--Macaulay property,
$B/J \cong K[\Gamma]$ is Cohen--Macaulay.
Then, since the Cohen--Macaulay property is preserved under taking links,
$\lk_\Gamma(F_1\cup \cdots \cup F_s)$ is Cohen--Macaulay.
Hence $\mathrm{depth}(K[\lk_\Gamma(F_1\cup \cdots \cup F_s)])=\dim (\lk_\Gamma(F_1\cup \cdots \cup F_s))+1$.

Suppose $\mathrm{depth}(S'/I') \geq r$.
Then $\mathrm{depth}(K[\Gamma]) \geq r +s(N-1)$.
Since $F_k$ is a proper subset of $V_k$ for $k=1,2,\dots,s$,
$|F_1 \cup \cdots \cup F_s| \leq s(N-1)$.
Then Lemma \ref{4-1} guarantees
$ \mathrm{depth}(K[\lk_\Gamma(F_1\cup \cdots \cup F_s)]) \geq r$.
\end{proof}

Next, we consider upper bounds of graded Betti numbers.
On upper bounds of graded Betti numbers of homogeneous ideals,
one of influential results is the Bigatti-Hulett-Pardue theorem \cite{Bi,Hu,Pa} which proved that
a lex ideal $L$ in $S$ has the largest graded Betti numbers among all homogeneous ideals in $S$
having the same Hilbert series as $L$.
Recall that a monomial ideal $I \subset S$ is said to be \textit{lex}
if, for all monomials $u \in I$ and $v >_\lex u$ with $\deg v =\deg u$, one has $v \in I$,
where $>_\lex$ is the lexicographic order induced by the ordering $x_1> \cdots >x_n$.
It is known that, for every homogeneous ideal $I \subset S$,
there exists the unique lex ideal $I^\lex \subset S$ such that
$S/I$ and $S/I^\lex$ have the same Hilbert series.
The Bigatti-Hulett-Pardue theorem says $\beta_{i,j}(S/I)\leq \beta_{i,j}(S/I^\lex)$ for all $i,j$.

For a homogeneous ideal $I \subset S$ such that $S/I$ is Cohen--Macaulay,
it is easy to see that the Bigatti-Hulett-Pardue theorem can be improved as follows:
Let $S'=K[x_1,\dots,x_{n-d}]$,
where $d$ is the Krull dimension of $S/I$,
and $L \subset S'$ the lex ideal satisfying that $\dim_K(S'/L)_k=h_k(S/I)$ for all $k$.
Then $\beta_{i,j}^S(S/I)\leq \beta_{i,j}^{S'}(S'/L)$ for all $i$ and $j$.
(Note that $\beta_{i,j}^{S'}(S'/L)$ are much smaller than $\beta_{i,j}^{S}(S/I^\lex)$.)

The next theorem generalize the above fact for monomial ideals $I \subset S$ such that $S/I$
satisfies ($S_r$).

\begin{theorem}
\label{lex}
Let $I \subset S$ be a monomial ideal and $d$ the Krull dimension of $R=S/I$.
Suppose that $R$ satisfies $(S_r)$.
Then there exists a lex ideal $L \subset S'=K[x_1,\dots,x_{n-d}]$ such that
$\dim_K (S'/L)_k= h_k(R)$ for $k =0,1,\dots,r$ and
$$\beta_{i,i+j}^S(S/I) \leq \beta_{i,i+j}^{S'}(S'/L)
\ \ \mbox{for all $i \geq 0$ and $j \leq r-1$}. $$
\end{theorem}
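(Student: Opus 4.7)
The plan is to reduce the question, by polarization and reduction modulo a linear system of parameters, to a statement about an Artinian quotient of a polynomial ring in $n-d$ variables to which the Bigatti--Hulett--Pardue theorem applies directly.

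First, polarize $I$ to obtain a squarefree monomial ideal $I^{\pol} \subset A$, whose quotient $A/I^{\pol} = K[\Delta]$ is the Stanley--Reisner ring of some simplicial complex $\Delta$. By the proof of Theorem \ref{4-2}, $\Delta$ satisfies $(S_r)$; by polarization the graded Betti numbers and the $h$-vector of $S/I$ coincide with those of $A/I^{\pol}$. Writing $d' = \dim A/I^{\pol} = d + n(N-1)$, the ring $A$ has $nN$ variables, so after killing any linear system of parameters of $K[\Delta]$ consisting of linearly independent linear forms one lands in a polynomial ring on $nN - d' = n-d$ variables, which I identify with $S' = K[x_1,\ldots,x_{n-d}]$.

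Next, Lemma \ref{sqmodule} verifies the hypothesis of Proposition \ref{prop} for $K[\Delta]$, so there is an almost regular sequence $\yy = y_1,\ldots,y_{nN}$ on $K[\Delta]$ forming a $K$-basis of $A_1$, whose first $d'$ entries $\Theta = \theta_1,\ldots,\theta_{d'}$ form a linear system of parameters satisfying $A_k(\Theta;K[\Delta])_j = 0$ for all $k=0,\ldots,d'-1$ and $j\leq r-1$. Iterating Lemma \ref{3-1} once for each $\theta_k$ then yields
$$\beta_{i,i+j}^A(A/I^{\pol}) \leq \beta_{i,i+j}^{S'}(S'/J) \quad (i\geq 0,\ j\leq r-1),$$
where $S'/J := K[\Delta]/\Theta K[\Delta]$. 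By Lemma \ref{3}(i) this Artinian quotient satisfies $\dim_K (S'/J)_k = h_k(K[\Delta]) = h_k(S/I)$ for $k\leq r$.

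Finally, apply the Bigatti--Hulett--Pardue theorem to $S'/J$ to produce a lex ideal $L \subset S'$ with $\dim_K (S'/L)_k = \dim_K (S'/J)_k$ for every $k$ and $\beta_{i,j}^{S'}(S'/J) \leq \beta_{i,j}^{S'}(S'/L)$ for all $i,j$. Chaining polarization, the iterated Lemma \ref{3-1}, and Bigatti--Hulett--Pardue gives $\beta_{i,i+j}^S(S/I) \leq \beta_{i,i+j}^{S'}(S'/L)$ for $j\leq r-1$, while the Hilbert-function matching $\dim_K(S'/L)_k = h_k(S/I)$ for $k \leq r$ is inherited through $S'/J$. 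The main technical point I foresee is the bookkeeping around polarization, namely confirming that $A/\Theta A$ is naturally a polynomial ring on exactly $n-d$ variables and that the vanishing of $A_k(\Theta;K[\Delta])_j$ correctly propagates through each step of the iteration of Lemma \ref{3-1}; both verifications are essentially formal.
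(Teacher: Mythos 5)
Your proposal is correct and follows essentially the same route as the paper: polarize to reduce to a Stanley--Reisner ring, use Lemma \ref{sqmodule} and Proposition \ref{prop} to get a linear system of parameters with the required vanishing of the modules $A_k(\Theta;\cdot)$, iterate Lemma \ref{3-1} to pass to the Artinian quotient in $n-d$ variables, match the Hilbert function via Lemma \ref{3}, and finish with Bigatti--Hulett--Pardue. The only cosmetic difference is that you carry out the polarization bookkeeping explicitly (and should, strictly speaking, dispose of the trivial case $r=1$ separately since Lemma \ref{sqmodule} assumes $r\geq 2$), whereas the paper dispatches both points with a reference to the proof of Theorem \ref{4-2}.
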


\begin{proof}
The statement is obvious when $r=1$.
Suppose $r \geq 2$.
Since polarization does not change graded Betti numbers,
in the same way as in the proof of Theorem \ref{4-2},
it is enough to prove the statement for Stanley--Reisner rings.

Let $\Delta$ be a ($d-1$)-dimensional simplicial complex on $[n]$ which satisfies ($S_r$) and $R=K[\Delta]$.
By Proposition \ref{prop} and Lemmas \ref{sqmodule} and \ref{3-1}, there exists a linear system of parameters $\Theta$
of $R$ such that $\beta_{i,i+j}^S(R) \leq \beta_{i,i+j}^{S/\Theta S}(R/\Theta R)$
for all $i \geq 0$ and $j \leq r-1$.
Since $S/\Theta S \cong S'=K[x_1,\dots,x_{n-d}]$,
there exists a homogeneous ideal $J \subset S'$ such that
$\beta_{i,j}^{S/\Theta S} (R/\Theta R)= \beta_{i,j}^{S'}(S'/J)$
for all $i,j$.
Since $R/\Theta R$ and $S'/J$ have the same Hilbert series,
it follows from Lemma \ref{3} that $\dim_K (S'/J)_k = \dim_K (R/\Theta R)_k=h_k(R)$ for $k \leq r$.
Then the Bigatti-Hulett-Pardue theorem guarantees that the lex ideal $L \subset S'$ having the same Hilbert series as $J$ satisfies the desired conditions.
\end{proof}

Finally, we give a simpler upper bound of graded Betti numbers which only depends on codimension, and determine when the equality holds.
A homogeneous ideal $I \subset S$ is said to have a \textit{$k$-linear resolution}
if $I$ is generated by elements of degree $k$ and $\reg(I)=k$.

\begin{theorem}
\label{ub}
Let $I \subset S$ be a monomial ideal and $d$ the Krull dimension of $R=S/I$.
Suppose that $R$ satisfies $(S_r)$.
Let $S'=K[x_1,\dots,x_{n-d}]$ and $\mathfrak n =(x_1,\dots,x_{n-d}) \subset S'$. Let $k \leq r$ be an integer. Then
\begin{itemize}
\item[(i)]
$ \beta_{i,i+k-1}(S/I) \le \beta_{i,i+k-1}^{S'}(S'/\mathfrak n^{k})$ for all $i \geq 0$.
\item[(ii)]
the following conditions are equivalent.
\begin{itemize}
\item[(a)] $\beta_{i,i+k-1}(S/I) = \beta_{i,i+k-1}^{S'}(S'/\mathfrak n^{k})$
for some $1 \leq i \leq n-d$;
\item[(b)] $\beta_{i,i+k-1}(S/I) = \beta_{i,i+k-1}^{S'}(S'/\mathfrak n^{k})$
for all $i \geq 0$;
\item[(c)] $S/I$ is Cohen--Macaulay and $I$ has a $k$-linear resolution.
\end{itemize}
\end{itemize}
\end{theorem}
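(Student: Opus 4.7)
For (i), I would apply Theorem \ref{lex} to obtain a lex ideal $L \subset S' = K[x_1,\dots,x_{n-d}]$ with $\dim_K (S'/L)_j = h_j(R)$ for $j \le r$ and $\beta^S_{i,i+k-1}(S/I) \leq \beta^{S'}_{i,i+k-1}(S'/L)$ for $k \leq r$. Since $L$ is lex, and in particular stable, the Eliahou--Kervaire formula gives
$$\beta^{S'}_{i, i+k-1}(S'/L) = \beta^{S'}_{i-1, (i-1)+k}(L) = \sum_{\substack{u \in G(L)\\ \deg u=k}} \binom{m(u)-1}{i-1},$$
where $m(u) = \max\{j : x_j \mid u\}$. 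The same formula applied to $\mathfrak{n}^k$, whose minimal generators are exactly the degree-$k$ monomials of $S'$, reduces the claim to the observation that $G(L) \cap S'_k \subseteq G(\mathfrak{n}^k)$ with non-negative summands.

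For (ii), the implication (b)$\Rightarrow$(a) is immediate. For (c)$\Rightarrow$(b), Cohen--Macaulayness of $S/I$ lets me choose a linear regular sequence $\Theta = \theta_1,\dots,\theta_d$ on $R$; then $\beta^S_{i,j}(S/I) = \beta^{S'}_{i,j}(R/\Theta R)$ and $R/\Theta R \cong S'/J$ for an ideal $J \subset S'$. The $k$-linear resolution of $I$ descends to one for $J$, so $J$ is generated in degree $k$ with $\reg(S'/J) = k-1$; Artinianness of $S'/J$ then forces $J \supseteq \mathfrak{n}^k$, and degree-$k$ generation gives $J = \mathfrak{n}^k$, establishing (b).

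The heart of the argument is (a)$\Rightarrow$(c). A single equality $\beta_{i,i+k-1}(S/I) = \beta^{S'}_{i,i+k-1}(S'/\mathfrak{n}^k)$ forces both inequalities in the chain from (i) to be equalities, so
$$\sum_{u \in G(L),\,\deg u = k} \binom{m(u)-1}{i-1} = \sum_{u \in G(\mathfrak{n}^k)} \binom{m(u)-1}{i-1}.$$
Non-negativity of the binomial coefficients forces $G(L)$ to contain every degree-$k$ monomial $u$ with $m(u) \ge i$, in particular the lex-smallest degree-$k$ monomial $x_{n-d}^k$. Combined with $x_{n-d}^k \in G(L)$, the lex property of $L$ gives $L_k = S'_k$ and no power $x_{n-d}^j$ with $j<k$ lies in $L$. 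A short case check rules out any other lower-degree generator: any such generator $v$ would produce $v \cdot x_{n-d}^{k-\deg v} \in L_k$, a monomial divisible by $x_{n-d}$ that is not a minimal generator, contradicting the forced containment. Hence $L = \mathfrak{n}^k$, so $h_k(R) = \dim_K (S'/L)_k = 0$; Theorem \ref{4-2}(ii) then makes $S/I$ Cohen--Macaulay with $h_j(R)=0$ for all $j \ge k$, and the CM reduction of (c)$\Rightarrow$(b) identifies $R/\Theta R \cong S'/\mathfrak{n}^k$, whence $I$ has a $k$-linear resolution.

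The main obstacle is this last step: upgrading the vanishing of a single Eliahou--Kervaire sum to the global structural conclusion $L = \mathfrak{n}^k$. The delicate point is verifying that the lex property of $L$, combined with an equality at only one index $i \in [1,n-d]$, genuinely prevents $L$ from having any lower-degree generator, for which the trick of multiplying by a suitable power of $x_{n-d}$ to produce a non-minimal degree-$k$ monomial with $m = n-d$ is essential.
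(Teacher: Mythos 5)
Your proposal is correct and follows essentially the same route as the paper: Theorem \ref{lex} plus the Eliahou--Kervaire formula for (i), and for (a)$\Rightarrow$(c) the same chain of equalities forcing $G(L)$ to contain all degree-$k$ monomials with $m(u)\ge i$, the same multiplication-by-$x_{n-d}$ trick to exclude lower-degree generators of $L$, and then $L=\mathfrak n^k$, $h_k=0$, and Theorem \ref{4-2}(ii). The only differences are cosmetic: you spell out (c)$\Rightarrow$(b), which the paper cites as well known, and you conclude $k$-linearity by identifying $R/\Theta R\cong S'/\mathfrak n^k$ rather than via the regularity formula for Cohen--Macaulay quotients.
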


\begin{proof}
By Theorem \ref{lex},
there exists a lex ideal $L$ of $S'$ satisfying that
$\dim_K (S'/L)_j=h_j(S/I)$ and
$\beta_{i,i+j-1}^S(S/I) \leq \beta_{i,i+j-1}^{S'}(S'/L)$ for all $i \geq 0$ and $j \leq r$.
On the other hand, by the Eliahou-Kervaire formula \cite{EK},
\begin{eqnarray}
\label{ek}
&&\beta_{i,i+k-1}^{S'}(S'/L) = \sum_{{u \in G(L)}\atop{\deg u=k}} { m(u) -1 \choose i-1} \leq \sum_{{u \in G(\mathfrak n^k)}}{ m(u) -1 \choose i-1} =
\beta_{i,i+k-1}^{S'}(S'/\mathfrak n^{k})
\end{eqnarray}
for all $i \geq 1$,
where $m(u)$ is the maximal integer $\ell$ such that $x_\ell$ divides $u$.
The above inequality proves (i).

Next, we show (ii).
The implication $(c) \Rightarrow (b)$ is well-known and $(b) \Rightarrow (a)$ is obvious.
We will prove $(a) \Rightarrow (c)$.

Suppose $\beta_{i,i+k-1}^{S'}(S'/\mathfrak n^{k})=\beta_{i,i+k-1}^S(S/I)$
for some integer $1 \leq i \leq n-d$.
By (\ref{ek}),
$$
\beta_{i,i+k-1}^{S'}(S'/\mathfrak n^{k})=\beta_{i,i+k-1}^S(S/I) \leq \beta_{i,i+k-1}^{S'}(S'/L) \leq \beta_{i,i+k-1}^{S'}(S'/\mathfrak n^{k}).$$
Thus $\beta_{i,i+k-1}^{S'}(S'/L) = \beta_{i,i+k-1}^{S'}(S'/\mathfrak n^{k}).$

We claim $L= \mathfrak n^k$.
By (\ref{ek}),
$\beta_{i,i+k-1}^{S'}(S'/L) = \beta_{i,i+k-1}^{S'}(S'/\mathfrak n^{k})$
implies that $G(L)$ contains all monomials $u \in S'$ of degree $k$
with $m(u) \geq i$.
Then $x_{n-d}^k \in L$.
Since $L$ is lex, $L \supset \mathfrak n^k$.
It remains to prove that $L$ has no generators of degree $<k$.
If $L$ has a generator $v \in G(L)$ of degree $k-s$ with $s>0$ then
$v x_{n-d}^s \not \in G(L)$.
This contradicts the fact that $G(L)$ contains all monomials $u \in S'$ of degree $k$
with $m(u) \geq i$.

Since $L=\mathfrak n^k$, we have $h_k(S/I)=\dim_K (S'/L)_k=0$.
Then, by Theorem \ref{4-2}(ii), $S/I$ is Cohen--Macaulay
and $h_j(S/I)=0$ for all $j \geq k$.
In particular, $\reg (I) \leq k$ since $\reg (J)= \max\{j: h_j(S/J) \ne 0\} +1$ for any homogeneous ideal $J \subset S$ such that $S/J$ is Cohen--Macaulay.
Also, since $ \beta_{i,i+j}^S(S/I) \leq \beta_{i,i+j}^{S'}(S'/L)=\beta_{i,i+j}^{S'}(S'/\mathfrak n^k)$ for all $j \leq k$,
$I$ has no generators of degree $<k$.
Hence $I$ has a $k$-linear resolution.
\end{proof}

\begin{example}
The bound considered in Theorem \ref{ub} is false
under the weaker assumption that $\beta_{i,i+k-1}(S/I)=0$ for all $i > n-d$ and $k \leq r$.
Let $\Delta$ be a simplicial complex generated by
$$\big\{ \{1,2,3\},\{3,4,5\},\{5,6,7\},\{7,8,1\}\big\}.$$
Then $\beta_{i,i+k}(K[\Delta])=0$ for all $i > \mathrm{codim}(K[\Delta])=5$ and $k \leq 1$,
however, $$\beta_{1,2}\big(K[\Delta]\big)=16>\beta_{1,2}\big(K[x_1,\dots,x_5]/(x_1,\dots,x_5)^2\big)=15.$$
\end{example}

We are not sure whether Theorem \ref{4-2} holds for all homogeneous ideals.
It would be interesting to find classes of homogeneous ideals for which
Theorem \ref{4-2} holds.
\bigskip

\section*{Acknowledgements}
This work is supported by KAKENHI 20540047.
The first author is supported by JSPS Research Fellowships for Young Scientists.
We are grateful to Ryota Okazaki for helpful discussions
about spectral sequences and Matlis duality.

\end{document}